\pdfoutput=1
\RequirePackage{ifpdf}
\ifpdf 
\documentclass[pdftex]{sigma}
\else
\documentclass{sigma}
\fi

\newcommand{\g}{\mathrm{g}}
\newcommand{\Y}{\mathrm{Y}}

\newcommand{\SO}{\text{SO}}
\newcommand{\R}{\mathbb{R}}
\newcommand{\E}{\mathbb{E}}

\usepackage{enumerate}

\numberwithin{equation}{section}

\newtheorem{Theorem}{Theorem}[section]
\newtheorem*{Theorem*}{Theorem}
\newtheorem{Lemma}[Theorem]{Lemma}
\newtheorem{Corollary}[Theorem]{Corollary}
\newtheorem{theoremA}{Theorem}

\theoremstyle{definition}
\newtheorem{Definition}[Theorem]{Definition}

\newtheorem{rem}[Theorem]{Remark}
\newtheorem{Example}[Theorem]{Example}

\begin{document}

\allowdisplaybreaks

\newcommand{\arXivNumber}{2602.00439}

\renewcommand{\thefootnote}{}

\renewcommand{\PaperNumber}{043}

\FirstPageHeading

\ShortArticleName{Finiteness of Totally Magnetic Hypersurfaces}

\ArticleName{Finiteness of Totally Magnetic Hypersurfaces\footnote{This paper is a~contribution to the Special Issue on Geometry and Dynamics in memory of Will Merry. The~full collection is available at \href{https://sigma-journal.com/Merry.html}{https://sigma-journal.com/Merry.html}}}

\Author{James MARSHALL REBER~$^{\rm a}$ and Ivo TEREK~$^{\rm b}$}

\AuthorNameForHeading{J.~Marshall Reber and I.~Terek}

\Address{$^{\rm a)}$~Department of Mathematics, University of Chicago, Chicago IL 60637, USA}
\EmailD{\mail{jmarshallreber@uchicago.edu}}

\Address{$^{\rm b)}$~Department of Mathematics, University of California Riverside, Riverside CA 92521, USA}
\EmailD{\mail{ivo.terek@ucr.edu}}

\ArticleDates{Received January 31, 2026, in final form April 26, 2026; Published online May 02, 2026}

\Abstract{By introducing a dynamical version of the second fundamental form, we generalize a recent result of Filip--Fisher--Lowe to the setting of magnetic systems. Namely, we show that a real-analytic negatively $s$-curved magnetic system on a closed real-analytic manifold has only finitely many closed totally $s$-magnetic hypersurfaces, unless the magnetic $2$-form is trivial and the underlying metric is hyperbolic.}

\Keywords{magnetic flows; real-analytic manifolds; hyperbolic metrics}

\Classification{37D20; 37D40; 53C15; 53C20}

\renewcommand{\thefootnote}{\arabic{footnote}}
\setcounter{footnote}{0}

\section{Introduction}

A \emph{magnetic system} on a~closed manifold $M$ is a pair $(\g,\sigma)$, where $\g$ is a Riemannian metric and~$\sigma$~is a~closed $2$-form on $M$. Associated to such a pair is a dynamical system on the tangent bundle~$TM$, called the \emph{magnetic flow}, which is given by the second-order nonhomogeneous differential equation
\begin{equation} \label{eqn:magnetic-geodesic} \frac{{\rm D} \dot{\gamma}}{{\rm d}t}(t) = \Y_{\gamma(t)}(\dot{\gamma}(t)),\end{equation}
where ${\rm D}/{\rm d}t$ denotes the covariant derivative operator along $\gamma$ induced by $\g$, and $\Y\colon TM\to TM$ is the \emph{Lorentz force operator} of $(\g,\sigma)$, defined via the condition
\begin{equation}\label{Lorentz-force} \g_x(\Y_x(v), w) = \sigma_x(v,w) \qquad\text{for all } x\in M\mbox{ and } v,w\in T_xM.\end{equation}
It is a straightforward exercise to show that the magnetic flow $\varphi^t\colon TM\to TM$, defined by \mbox{$\varphi^t(\gamma(0), \dot{\gamma}(0)) = (\gamma(t), \dot{\gamma}(t))$}, preserves each $s$-sphere bundle \begin{equation}\label{defn-Sigma_s}
 \Sigma_s = \bigl\{(x,v) \in TM \mid \g_x(v,v) = s^2\bigr\},\qquad s>0.
\end{equation}
Unlike the geodesic flow, the dynamics of the magnetic flow can change drastically as one varies~$s$. For example, if $(M,\g)$ is a hyperbolic surface and $\sigma$ is its area form, then for $s > 1$ the magnetic flow is continuously orbit-equivalent to the underlying geodesic flow, for $s = 1$ it is minimal, and for $s < 1$ every orbit is contractible (such example is well known, see \cite{paternain2006magnetic}). As a consequence, specifying the parameter $s$, we refer to the restriction of the magnetic flow to $\Sigma_s$ as the \emph{$s$-magnetic flow}.

Let $UM=\Sigma_1$ be the unit-tangent bundle of $(M,\g)$ and, for $(x,v)\in UM$, let ${P_v\colon T_xM \rightarrow \mathbb{R} v}$ and $P_{v^\perp}\colon T_xM \rightarrow v^\perp$ be the corresponding tangential and orthogonal projections. We also consider the vector bundle $E \rightarrow UM$ of orthogonal hyperplanes, whose fiber over an element $(x,v) \in UM$ is given by $E_{(x,v)} = v^\perp$. For $s > 0$, we consider the endomorphisms $A^{(g, \sigma)}$ and $R^{(g, \sigma, s)}$ of $E$ given by
\begin{equation}\label{R-and-A} \begin{split}
& A^{(\g,\sigma)}_{(x,v)}(w) = -\frac{3}{4} \Y_x(P_v(\Y_x(w))) - \frac{1}{4} P_{v^\perp}\bigl(\Y_x^2(w)\bigr), \\
& R^{(\g,\sigma,s)}_{(x,v)}(w) = s^2 R_x(w,v)v - s (\nabla_w \Y)_x(v) + \frac{s}{2} P_{v^\perp}((\nabla_v \Y)_x(w)),
\end{split} \end{equation}
where $\nabla$ is the Levi-Civita connection of $\g$ and $R$ its curvature tensor. The \emph{$s$-magnetic curvature operator} is then
\begin{equation*} M^{(g,\sigma,s)}_{(x,v)}(w) = R^{(\g,\sigma,s)}_{(x,v)}(w) + A^{(\g,\sigma)}_{(x,v)}(w)\end{equation*}and, if $\text{St}_2(M,\g)$ denotes the Stiefel bundle of ordered orthonormal $2$-frames tangent to $M$, the \emph{$s$-magnetic sectional curvature} \smash{$\text{Sec}^{(g,\sigma,s)} \colon \text{St}_2(M,\g) \rightarrow \mathbb{R}$} is \smash{$\text{Sec}^{(\g,\sigma,s)}_x(v,w) =\g_x\bigl(M^{(g,\sigma,s)}_{(x,v)}(w) , w\bigr)$}. See also \cite[Section 1.2]{assenza-imrn}.

By design, the $s$-magnetic curvature captures relevant dynamical information about the \mbox{$s$-magnetic} flow. For example, \begin{equation}\label{neg_sec_Anosov}
 \parbox{.7\textwidth}{if $\text{Sec}^{(g,\sigma,s)}<0$, then the $s$-magnetic flow is Anosov,}
\end{equation}cf.\
 \cite[Appendix A]{assenza2025magnetic}. Moreover, the magnetic sectional curvature also influences the underlying geometry. An example of this phenomenon is the following: if $\sigma \neq 0$ and $\dim(M) \geq 3$, and for some $s>0$ the $s$-magnetic sectional curvature is everywhere constant, then $\sigma$ is parallel and $J=\|\Y\|^{-1} \Y$ makes $(M,\g)$ a K\"ahler manifold with constant holomorphic sectional curvature equal to $-\|\Y\|^2/s^2$ \cite[Theorem D]{assenza2025magnetic}.

An immersed submanifold $N \subseteq M$ is \emph{totally $s$-magnetic} if every solution to \eqref{eqn:magnetic-geodesic} with speed~$s$ which starts tangent to $N$ remains in $N$ for small time, cf.\ \cite[Definition 6.5]{ABM_2025}. In light of the recent work by Filip, Fisher, and Lowe \cite[Theorem 1]{filip2024finiteness}, we aim to show that real-analytic magnetic systems of the form $(\g, 0)$, where $\g$ is a hyperbolic metric on an arithmetic manifold, are special in the sense that they are the only negatively magnetically curved examples which have infinitely many totally $s$-magnetic immersed hypersurfaces for any (and hence all) $s > 0$. As in \cite{filip2024finiteness}, a metric $\g$ is hyperbolic if it has constant strictly negative sectional curvature, a~manifold is hyperbolic if it admits a hyperbolic metric, and a hyperbolic manifold is arithmetic if it can be written as $\mathbb{H}^n/\Gamma$, where $\Gamma$ is an arithmetic lattice.

\begin{theoremA} \label{thm:magnetic-finiteness}
 Let $(\g, \sigma)$ be a real-analytic magnetic system on a closed, connected, real-analytic manifold $M$ of dimension at least $3$. If there exists $s > 0$ such that the $s$-magnetic sectional curvature of $(\g, \sigma)$ is everywhere negative and $M$ contains infinitely many closed totally \mbox{$s$-magnetic}
 hypersurfaces, then $\sigma = 0$ and $(M,\g)$ is isometric to a hyperbolic manifold. Moreover, $M$ is arithmetic.
\end{theoremA}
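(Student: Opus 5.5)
The strategy is to reduce Theorem~\ref{thm:magnetic-finiteness} to the dynamical mechanism behind \cite[Theorem 1]{filip2024finiteness}. That argument uses three ingredients: an Anosov flow, an analytic tangent-plane bundle, and an equidistribution/rigidity step. We will transplant each of them to the $s$-magnetic flow on $\Sigma_s$.

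\textbf{Step 1: lifts and Anosov property.} By \eqref{neg_sec_Anosov}, the $s$-magnetic flow $\varphi^t|_{\Sigma_s}$ is Anosov. It is also real-analytic, since $\g$ and $\sigma$ are. Each closed totally $s$-magnetic hypersurface $N$ lifts to a closed flow-invariant submanifold
\[
\Sigma_s N=\{(x,v)\in\Sigma_s\mid x\in N,\ v\in T_xN\}
\]
of codimension $2$ in $\Sigma_s$. We then introduce a dynamical second fundamental form, which is a magnetic analogue of the shape operator, evolving along orbits by a Riccati-type equation involving $M^{(\g,\sigma,s)}$. The defining property we want is this: a hypersurface is totally $s$-magnetic if and only if, for every tangent $v$, its shape operator equals a prescribed tensor built from $\Y$. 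Concretely, writing $\nu$ for the unit normal, the vanishing of the normal component of \eqref{eqn:magnetic-geodesic} gives
\[
\g(\mathrm{II}(v,v),\nu)=s^{-1}\sigma(v,\nu)
\]
after rescaling, for all tangent $v$.

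\textbf{Step 2: the infinite family forces an analytic invariant structure.} Given infinitely many such $N_i$, we follow Filip--Fisher--Lowe and pass to a limit of the associated invariant measures on $\Sigma_s$, or on the Grassmann bundle of tangent hyperplanes. Using Anosov closing/shadowing together with the fact that the lifts $\Sigma_s N_i$ are distinct, we argue that they equidistribute to the Liouville measure, possibly after restricting to a subsequence. The tangent hyperplane fields then converge to a measurable invariant section of the hyperplane bundle, which is defined on the full support. Analyticity plus the usual Zimmer/normal-form rigidity, as in \cite{filip2024finiteness}, upgrades this to the following conclusion: through every $(x,v)$ and every hyperplane containing $v$ there passes a totally $s$-magnetic hypersurface. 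In other words, the set of hyperplanes tangent to totally $s$-magnetic hypersurfaces, which is an analytic subvariety of the Grassmann bundle, is everything.

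\textbf{Step 3: algebraic rigidity.} If every hyperplane $H\ni v$ at every point is tangent to a totally $s$-magnetic hypersurface, the identity in Step 1 must hold for all choices of $H$. The quadratic form $v\mapsto \sigma(v,\nu)$ restricted to $H=\nu^\perp$ is only linear in $v$, whereas $\mathrm{II}$ is symmetric. Replacing $v$ by $-v$ and using that the shape identity is even in $v$, we get $\sigma(v,\nu)=0$ for all $v\perp\nu$, hence $\sigma=0$. This step needs $\dim M\ge 3$ so that there are enough hyperplanes. The problem then reduces to geodesic flows with totally geodesic hypersurfaces through every tangent hyperplane. Cartan's axiom of hyperplanes then gives constant curvature, and negativity of the magnetic curvature (now the ordinary sectional curvature) makes the metric hyperbolic. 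Arithmeticity follows from \cite{filip2024finiteness}, or from Bader--Fisher--Miller--Stover, which shows that infinitely many totally geodesic hypersurfaces force arithmeticity.

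\textbf{Main obstacle.} The main obstacle is Step 2. The argument of \cite{filip2024finiteness} uses the symmetry $v\mapsto -v$ and the Riemannian Jacobi equation. Here the magnetic flow is not reversible, so we must check that the dynamical second fundamental form yields an invariant subbundle in the stable/unstable splitting with the required analyticity. We would first try to show that the lifts $\Sigma_s N$ are normally hyperbolic with tangent spaces determined by $E^s\oplus E^u$ intersected with the lifted tangent data. This would give a uniform bound for the limiting argument.
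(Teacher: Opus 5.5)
The gap is in Step 2, and it is where the whole difficulty of the theorem lies. Your Steps 1 and 3 are fine. The identity $\g(\mathrm{I\!I}(v,v),\nu)=s^{-1}\sigma(v,\nu)$ is exactly the vanishing of the dynamical second fundamental form for a hypersurface. Your parity argument is the same even/odd argument the paper uses for Theorem~\ref{dyn-Cartan}; it already shows that each totally $s$-magnetic hypersurface is totally geodesic with $\sigma(v,\nu)=0$ for $v\in TN$. After that, Cartan's axiom and Bader--Fisher--Miller--Stover finish.

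The mechanism you give for Step 2 would not work, for three reasons.
\begin{enumerate}
\item Distinct lifts $\Sigma_sN_i$ plus closing/shadowing give no control over weak-$*$ limits of their normalized measures; such limits can be any invariant measure. Equidistribution to Liouville is a Ratner/Mozes--Shah phenomenon, available only in the homogeneous setting, i.e.\ once the conclusion is already known.
\item Even granting it, a limit of measures on the hyperplane bundle need not be supported on a section. Moreover, ``invariance'' there presupposes a flow on the frame/Grassmann bundle that preserves tangent planes of the hypersurfaces, which you never define. Levi-Civita parallel transport fails here, since $\frac{{\rm D}\dot\gamma}{{\rm d}t}=\Y(\dot\gamma)\neq0$.
\item The ``Zimmer/normal-form rigidity'' upgrade is an unspecified black box, and the normal hyperbolicity idea in your last paragraph does not address any of this.
\end{enumerate}

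What is missing is the argument the paper uses, following Filip--Fisher--Lowe, which avoids equidistribution entirely.
\begin{enumerate}
\item[(a)] Prove that the set $Z\subseteq HM$ of hyperplanes tangent to germs of totally $s$-magnetic hypersurfaces is real-analytic. This uses the magnetic exponential map, which is only $C^1$ at the origin when $\sigma\neq0$ (Lemma~\ref{lem:dynamical-exp}); hence one works with the augmented map on $FM\times(0,\infty)$. One then integrates $\|\mathrm{I\!I}^\varphi\|^2$ and applies \cite[Lemma 3.2.2]{filip2024finiteness}.
\item[(b)] Use the infinitude of hypersurfaces to get $\dim Z\geq n$, and again in the Brin-group case analysis.
\item[(c)] Show every unit vector is tangent to such a hypersurface (Theorem~\ref{thm:vol-preserving}). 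This combines a dimension dichotomy, absence of conjugate points, condition~\eqref{eqn:condition}, and density of forward-generic points in local unstable leaves (Lemma~\ref{lem:cor-2-1-6}).
\item[(d)] Pass from vectors to all hyperplanes using the magnetic frame-extension flow built from $\mathcal{D}=\frac{\rm D}{{\rm d}t}-\Y$, which is metric-compatible with $\mathcal{D}\dot\gamma=0$. Its Brin group is made connected by a finite magnetomorphic cover (Lemma~\ref{Brin-connected}). The topological obstructions to reducing the $\SO(n-1)$-structure group then give $\dim Z(x)=n-1$ for every $x$. Real-analyticity forces $Z$ to be open, so $Z=HM$ (Theorem~\ref{thm:vol-preserving2}).
\end{enumerate}
Without (a)--(d), your proposal never reaches the hypothesis of Step 3.
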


Our method of proof is dynamically motivated, and is similar in spirit to the work of Foulon in \cite{foulon}. In view of \eqref{neg_sec_Anosov}, by suitably rescaling both $\g$ and $\sigma$ if needed, we may assume that the $1$-magnetic flow of $(\g, \sigma)$ is Anosov (see Lemma \ref{lem:reduction}). With this perspective, we start by studying
real-analytic volume-preserving Anosov flows $\varphi^t \colon UM \rightarrow UM$. We say that
\begin{equation}\label{defn:totally-psi-invariant}
 \parbox{.5\textwidth}{a submanifold $N$ of $M$ is \emph{totally \mbox{$\varphi$-in\-va\-ri\-ant}} if $UN$ is a $\varphi^t$-invariant subset of $UM$ for all $\,t\in \R$.}
\end{equation}
In \eqref{defn:totally-psi-invariant}, $N$ is equipped with its induced metric so that $UN$ makes sense.
When $\varphi^t$ is either the magnetic (or, geodesic) flow, we recover the usual definition of totally magnetic (or, totally geodesic) submanifolds.
In Section~\ref{sec:dynamical-geometry}, we introduce a \emph{dynamical second fundamental form}~${\rm I\!I}^\varphi$, which precisely captures what it means for a submanifold to be totally $\varphi$-invariant (see Corollary~\ref{thm:characterization-dynamical-second-fund-form}).

We also define a \emph{dynamical exponential mapping} $\exp_x^\varphi$ in Section~\ref{sec:dynamical-geometry}. With this, we say that the flow $\varphi^t \colon UM \rightarrow UM$ is \emph{without conjugate points} if for every $x \in M$, the mapping $\exp_x^{\varphi}$ is a~local diffeomorphism.
We will consider Anosov flows which, on top of being without conjugate points, satisfy the additional transversality property:
\begin{equation} \label{eqn:condition}
 \mathbb{V}(x,v) \cap \E^{s/u}(x,v) = \{0\} \qquad\text{for all }(x,v) \in UM,
\end{equation}
where $\mathbb{V}$ is the vertical distribution of the fibration $UM\to M$ and $\mathbb{E}^{s/u}$ are the stable and unstable distributions of the flow $\varphi^t$ (see Section~\ref{subsec:anosov-flows} for more details on Anosov flows). Condition~\eqref{eqn:condition} is related to being without conjugate points for a variety of flows, and generally comes for free from the Anosov condition \cite{contreras2003asymptotic, Paternains-94-convex}. As arbitrary flows in $UM$ do not come from Hamiltonian flows on $TM$, it is not immediately clear whether all Anosov semi-spray flows on~$UM$ are without conjugate points or satisfy~\eqref{eqn:condition} (see~\cite{cuesta2025creation} for some results in this direction on surfaces).

Both the dynamical exponential map and the dynamical second fundamental form allow for us to follow the first part of the strategy in \cite[Section~3.1]{filip2024finiteness} for a special class of real-analytic volume-preserving Anosov flows.

\begin{theoremA} \label{thm:vol-preserving}
 Let $(M,\g)$ be a closed, connected, real-analytic Riemannian manifold of dimension at least $3$, and $\varphi^t\colon UM\to UM$ be a real-analytic volume-preserving Anosov flow which is without conjugate points. If it also satisfies \eqref{eqn:condition} and there are infinitely many closed totally $\varphi$-invariant hypersurfaces in $M$, then every unit tangent vector lies in a totally $\varphi$-invariant hypersurface.
\end{theoremA}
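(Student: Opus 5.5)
We follow the first part of the strategy of Filip--Fisher--Lowe, replacing geodesic notions with the dynamical exponential map $\exp^\varphi$ and the dynamical second fundamental form ${\rm I\!I}^\varphi$ of Section~\ref{sec:dynamical-geometry}.

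\emph{Step 1: lift to the Grassmann bundle.} Let $\pi\colon \mathrm{Gr}_{n-1}(M)\to M$ be the bundle of tangent hyperplanes, and let $F\subseteq \mathrm{Gr}_{n-1}(M)$ be the set of hyperplanes $P\subseteq T_xM$ such that every unit $v\in P$ satisfies $\varphi^t(x,v)\in$ the "hyperplane field swept out" along the orbit. Concretely, we define $F$ as the common zero set of the real-analytic conditions that the formal dynamical second fundamental form ${\rm I\!I}^\varphi$ and all of its covariant derivatives vanish at $P$. These conditions are computed from the jets of the hypersurface $\exp_x^\varphi(\text{nbhd of }0\text{ in }P)$, which is locally defined because $\varphi$ has no conjugate points. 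The set $F$ is then a real-analytic subvariety of the compact analytic manifold $\mathrm{Gr}_{n-1}(M)$.

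Each closed totally $\varphi$-invariant hypersurface $N$ lifts to a closed $(n-1)$-dimensional set $\{T_yN\}\subseteq F$, since ${\rm I\!I}^\varphi\equiv 0$ on $N$ by Corollary~\ref{thm:characterization-dynamical-second-fund-form}. Conversely, if $P\in F$, analyticity implies that the hypersurface $\exp^\varphi_x(P)$ is, near $x$, totally $\varphi$-invariant. Indeed, its ${\rm I\!I}^\varphi$ is an analytic object whose full jet vanishes at $x$ and is propagated along it.

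\emph{Step 2: dimension count.} Infinitely many distinct hypersurfaces give infinitely many distinct $(n-1)$-dimensional analytic pieces of $F$. By compactness and Noetherianity of the analytic stratification (finitely many irreducible components locally, hence globally), $F$ must contain an irreducible component $F_0$ of dimension $\ge n$.

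\emph{Step 3: use Anosov and \eqref{eqn:condition} to get full support.} Consider the closed $\varphi^t$-invariant set
\[
\Lambda=\{(x,v)\in UM : v\in P \text{ for some } P\in F_0\},
\]
which is invariant because each $P\in F_0$ generates a totally invariant hypersurface. The extra dimension of $F_0$ means that through points of $\Lambda$ there is a family of invariant hypersurfaces varying transversally. The step we expect to be the main obstacle is showing that $\Lambda$ has nonempty interior in $UM$. We would argue as in Filip--Fisher--Lowe. For a tangent vector $v$ to an invariant hypersurface $N$, the unit bundle $UN$ is an invariant $(2n-3)$-dimensional submanifold of the $(2n-1)$-dimensional $UM$, and it contains the weak stable and unstable leaves in directions tangent to $N$. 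Moving $N$ in the extra parameter of $F_0$ produces directions that, by \eqref{eqn:condition}, cannot be purely vertical inside $\E^{s/u}$. Together they span the missing two directions, so the map from the incidence variety $\{(P,v): v\in P\in F_0\}$ to $UM$ has surjective differential at generic points. Since the map is analytic, its image contains an open set. Volume preservation gives recurrence, and transitivity of volume-preserving Anosov flows then produces a dense orbit through this open set. By invariance, $\Lambda$ is dense in $UM$, and since $\Lambda$ is closed (compactness of $F_0$), $\Lambda=UM$. Hence every unit tangent vector lies in some $P\in F_0$, and therefore lies in the totally $\varphi$-invariant hypersurface $\exp^\varphi_x(P)$, as claimed.
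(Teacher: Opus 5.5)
\textbf{Gap in Step~3 (dimension count).} Step~2 only gives $\dim F_0\ge n$. In the critical case $\dim F_0=n$, the incidence variety $\{(P,v): v\in P\in F_0\}$ has dimension $n+(n-2)=2n-2<2n-1=\dim UM$. So its map to $UM$ can never have surjective differential. The extra parameter of $F_0$ supplies at most one of the two directions missing from $T\,UN$, and \eqref{eqn:condition} does not supply the second. If a whole curve of hyperplanes in $F_0$ contains the same unit vector, the image is smaller still. Hence $\Lambda$ may be a codimension-one invariant set with empty interior, and ``transitivity'' has nothing to act on.

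The missing idea is to use the hyperbolic structure to find a generic point inside an invariant analytic set of positive codimension. The paper splits into two cases for $\pi_U\colon I\to A$.
\begin{itemize}
\item If $\dim A=\dim I=2n-2$, a smooth piece of $A$ is, modulo the flow direction, transverse to (say) $\E^s$. It therefore contains an $(n-1)$-dimensional analytic $Q$ transverse to $W^{cs}$. Then $\pi^{cs}_u(Q)$ contains an open subset of a local unstable leaf, which contains a forward-generic point by Lemma~\ref{lem:cor-2-1-6}. Since $D^+$ is $cs$-saturated, $A$ contains a forward-generic point.
\item If instead the fibers of $\pi_U|_I$ are positive-dimensional, take a curve of invariant hypersurfaces $H_i$ through $x$, all tangent to $v$. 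They sweep out an open set $U_1\subseteq M$ because $\exp^\varphi_x$ is a local diffeomorphism; this is where ``without conjugate points'' is genuinely used. By \eqref{eqn:condition}, $W^{cs}_{\rm loc}(x,v)$ is the image of a local section of $UM\to M$, and its points over $U_1$ are tangent to the $H_i$. So $A\cap W^{cs}(x,v)$ has interior, and Lemma~\ref{lem:cor-2-1-6} again gives a generic point in $A$.
\end{itemize}
In both cases the closed invariant set $A$ is all of $UM$. In your proposal, neither the no-conjugate-points hypothesis nor \eqref{eqn:condition} does any real work.

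\textbf{Gap in Step~1 (jets at $x$).} By Lemma~\ref{lem:dynamical-exp}, $\exp^\varphi_x$ is in general only $C^1$ at the origin. For example, take a magnetic flow and a hyperplane $P$ with $[\Y_xP]^\perp\neq 0$. In normal coordinates, $S_P=\exp^\varphi_x(P)$ is the graph over $P$ of $\tfrac12|w|\,[\Y_xw]^\perp+O(|w|^3)$. Its leading term is odd and homogeneous of degree two, so it is not $C^2$ at $0$. This has several consequences:
\begin{itemize}
\item $S_P$ has no jets at $x$ beyond first order, so ``all covariant derivatives of ${\rm I\!I}^\varphi$ at $P$'' is undefined.
\item The real-analyticity of $F$, on which your Noetherianity argument rests, is therefore unfounded.
\item The converse ``the full jet vanishes at $x$, so by analyticity the germ is invariant'' fails, because $S_P$ is not analytic at $x$.
\end{itemize}
The paper avoids the origin altogether. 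It uses the augmented exponential map $E(x,\Pi,v,t)=\bigl(\exp^\varphi_x(tv),{\rm d}_{tv}\exp^\varphi_x(\Pi)\bigr)$ on $FM\times(0,\infty)$, which is real-analytic. It pulls back $\alpha(x,\Pi)=\int_{U_xS_\Pi}\|{\rm I\!I}^\varphi_x(v)\|^2\,{\rm d}\mu(v)$ and characterizes $Z$ as the set of $(x,\Pi)$ over whose entire fiber $E^*\alpha$ vanishes. It then applies Lemma~3.2.2 of Filip--Fisher--Lowe.
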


The next step in the strategy would be to show that every hyperplane tangent to $M$ can be realized by a totally $\varphi$-invariant hypersurface. The arguments in \cite{filip2024finiteness} take advantage of both the Brin group and the dynamics of the frame flow. In absence of a linear connection compatible with \emph{both} the metric and the flow, it is unclear how to define a frame flow from an arbitrary volume-preserving Anosov flow beyond the magnetic case. To that end, we restrict to the magnetic setting in the following.

\begin{theoremA} \label{thm:vol-preserving2}
 Let $(\g, \sigma)$ be a real-analytic magnetic system on a closed, connected, real-analytic manifold $M$ of dimension at least $3$ and $s > 0$. If the $s$-magnetic flow is Anosov and there are infinitely many closed totally $s$-magnetic hypersurfaces in $M$, then \emph{every} hyperplane tangent to~$M$ is realized as the tangent hyperplane to a totally $s$-magnetic hypersurface.
\end{theoremA}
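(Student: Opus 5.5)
The plan has three steps: reduce to Theorem~\ref{thm:vol-preserving}, pass to the magnetic frame flow, and then use ergodicity of that frame flow to reach every hyperplane, following \cite[Section~3.1]{filip2024finiteness}.

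\emph{Step 1: reduction.} By Lemma~\ref{lem:reduction}, after rescaling we may assume $s=1$. The $1$-magnetic flow $\varphi^t$ on $UM$ is real-analytic and preserves the Liouville volume. Being Anosov and Hamiltonian on the energy level $\Sigma_1$, it has no conjugate points and satisfies \eqref{eqn:condition}; this is the Paternain--Paternain / Contreras--Iturriaga input \cite{contreras2003asymptotic,Paternains-94-convex}. Theorem~\ref{thm:vol-preserving} therefore applies, and every unit vector $v\in UM$ is tangent to some totally magnetic hypersurface $N_v$. Using real-analyticity, I would also extract from its proof a finer statement. The set $\mathcal{H}\subseteq \mathrm{Gr}_{n-1}(TM)$ of hyperplanes realized as tangent hyperplanes of totally magnetic hypersurfaces is a closed real-analytic subvariety, defined by vanishing of the dynamical second fundamental form ${\rm I\!I}^\varphi$ together with all its covariant derivatives (Corollary~\ref{thm:characterization-dynamical-second-fund-form}). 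It is invariant under the natural lift of the flow, and it surjects onto $UM$ in the sense that every $v$ lies in some $H\in\mathcal{H}$.

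\emph{Step 2: the magnetic frame flow.} Define a flow $\Phi^t$ on the bundle $FM\to UM$ of orthonormal frames $(v,e_2,\dots,e_n)$. It moves $v$ by the magnetic flow and transports $v^\perp$ by the magnetic parallel transport $\frac{{\rm D}}{{\rm d}t}e=\langle \Y e,\dot\gamma\rangle$-type correction. Concretely, $e'=-\g(\Y(\dot\gamma),e)\,\dot\gamma/|\dot\gamma|^2$ up to the $\Y$-twist, chosen so that $v^\perp$ is preserved. The key observation is that if $N$ is totally magnetic, the induced frame flow preserves the set of frames with $v\in TN$ and $e_n\perp TN$. Hence, with $\mathcal{H}$ pulled back to frames, $\widetilde{\mathcal{H}}=\{(v,e_i): v\in H,\ e_n\perp H,\ H\in\mathcal{H}\}$ is a closed $\Phi^t$-invariant real-analytic subset of $FM$.

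\emph{Step 3: ergodicity.} Next I would show that $\widetilde{\mathcal{H}}$ has nonempty interior, or equivalently positive measure. Since the fibre over each $v$ is nonempty and invariant, I would argue via the Brin transitivity group: the stable and unstable holonomies of $\Phi^t$ preserve $\widetilde{\mathcal{H}}$. This holds because two totally magnetic hypersurfaces through asymptotic vectors have converging tangent hyperplanes, and $\mathcal{H}$ is closed. So the fibres of $\widetilde{\mathcal{H}}$ are unions of Brin-group orbits. If the Brin group acts transitively on the fibre $\SO(n-1)$, then $\widetilde{\mathcal{H}}=FM$, which means every hyperplane containing every unit vector lies in $\mathcal{H}$, and that is exactly the theorem. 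If it does not act transitively, the frame flow reduces to a proper closed subgroup. Then, as in \cite{filip2024finiteness}, the real-analytic set $\widetilde{\mathcal{H}}$ still meets each fibre in an analytic subset of positive dimension. This is because infinitely many distinct hypersurfaces accumulate, giving at least a one-parameter family of hyperplanes containing a generic $v$. Invariance under the reduced holonomy group, combined with the fact that a proper subgroup of $\SO(n-1)$ acting on $S^{n-2}$ with an orbit of a fixed analytic subset, then forces transitivity on normals $e_n\in v^\perp$.

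\emph{Main obstacle.} I expect the hardest step to be this last one: proving that the holonomy invariance and real-analyticity force the fibre of $\widetilde{\mathcal{H}}$ over $v$ to be all unit normals in $v^\perp$. This requires the Brin group to act transitively on the sphere $S(v^\perp)$, which is weaker than transitivity on frames. I would first try to establish this via accessibility of the magnetic frame flow and the fact that the infinitely many hypersurfaces produce a fibre of positive dimension. Failing that, I would fall back on the classification of closed subgroups of $\SO(n-1)$ acting non-transitively on spheres, and use the corresponding invariant parallel structures to derive a contradiction with negative magnetic curvature.
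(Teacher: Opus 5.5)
\textbf{There is a genuine gap, at exactly the step you flag as the main obstacle.} Step~3 gives no argument when the Brin group $\mathcal{B}$ of the frame extension does not act transitively on the unit sphere of $v^\perp$. Three things go wrong there.
\begin{enumerate}
\item The holonomy invariance you assert holds only for the \emph{generic part} of a closed $\Phi^t$-invariant set (see the discussion after \eqref{generic-part}), not for $\widetilde{\mathcal{H}}$ itself. Your justification via ``converging tangent hyperplanes'' of hypersurfaces through asymptotic vectors is also unsupported. This point is repairable.
\item For non-transitive $\mathcal{B}$, a nonempty closed union of $\mathcal{B}$-orbits in $\SO(n-1)/\SO(n-2)$ need not be everything, so nothing forces the fibre to be full. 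Your claim that the fibre over a generic $v$ has positive dimension is not justified: the count $\dim(I)=\dim(Z)+n-2$ together with $\pi_U(I)=UM$ still allows zero-dimensional fibres of $I\to UM$. Even granting positive dimension, it would not force transitivity.
\item The fallback of contradicting negative magnetic curvature cannot work. Theorem~\ref{thm:vol-preserving2} only assumes the flow is Anosov. Moreover, non-transitive Brin groups do occur in negatively curved examples: on a closed complex hyperbolic manifold of complex dimension at least $2$ with $\sigma=0$, the frame flow preserves $Jv$, so $\mathcal{B}$ fixes a vector of $v^\perp$.
\end{enumerate}

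\textbf{What is missing is the content of \eqref{Brin-dichotomy}--\eqref{full-dim}.}
\begin{enumerate}
\item Pass to a finite magnetomorphic cover so that $\mathcal{B}$ is connected (Lemma~\ref{Brin-connected}). You never address connectedness, but it is needed to invoke the topological obstructions to reducing the structure group of $PM\to UM$ to a connected subgroup of $\SO(n-1)$.
\item Combined with the infinitely many closed totally magnetic hypersurfaces, these obstructions force the orbit space $\mathcal{C}$ of \eqref{coset-space} to be a point or $[-1,1]$.
\item If $\mathcal{C}$ is a point, the fibres of $Z$ are full at once. If $\mathcal{C}=[-1,1]$, the topological argument of Filip--Fisher--Lowe yields only $\dim Z(x)=n-1$, i.e.\ full \emph{dimension}, not fullness.
\item The conclusion then comes from real-analyticity. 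Every germ of the closed real-analytic set $Z\subseteq HM$ is full-dimensional, hence (via its complexification) a neighbourhood. So $Z$ is open and closed, and therefore equals $HM$.
\end{enumerate}
Your intermediate target of nonempty interior is the right one. However, reaching it requires exactly this Brin-group classification plus the analyticity step, and your outline replaces both with an unproved transitivity claim.

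\textbf{A minor point on Step~2.} Your Fermi--Walker-type transport would also serve as an isometric extension. Indeed, the unit normal $\nu$ of a totally magnetic hypersurface is Levi-Civita parallel along it and satisfies $\Y\nu=0$. You should still pin the transport down, for instance as the paper's $\mathcal{D}W={\rm D}W/{\rm d}t-\Y W$.
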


To conclude the proof of Theorem \ref{thm:magnetic-finiteness}, we establish a dynamical version of \emph{Cartan's axiom of \mbox{$k$-planes}} \cite[Theorem 1.16]{dajczer-tojeiro}, which is classically characterized as follows:\ if $(M,\g)$ is a Riemannian manifold with dimension $n \geq 3$ and the property that, for some $1<k<n$, every $k$-plane tangent to $M$ is realized as the tangent space to a $k$-dimensional totally geodesic submanifold, then $(M,\g)$ has constant sectional curvature. We generalize this by replacing totally geodesic submanifolds with totally \mbox{$\varphi$-in\-va\-ri\-ant} submanifolds in the setting where the flow is \emph{odd} and \emph{without fixed points in $M$}, meaning the horizontal and vertical components of its infinitesimal generator are odd, and the former never vanishes (see \eqref{odd-defn} and the discussion after).

\begin{theoremA}\label{dyn-Cartan}
 Let $(M,\g)$ be a Riemannian manifold of dimension $n \geq 3$, $\varphi^t\colon UM\to UM$ be odd and without fixed points in $M$, and $1<k<n$ be an integer. If every $k$-plane tangent to $M$ is realized as the tangent space to a $k$-dimensional totally $\varphi$-invariant submanifold, then $(M,\g)$ has constant sectional curvature and $\varphi^t$ is a smooth time-change of the geodesic flow. Moreover, if $\varphi^t$ is a semi-spray flow, then it is the geodesic flow of $\g$.
\end{theoremA}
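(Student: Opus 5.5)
We prove Theorem~\ref{dyn-Cartan} in two stages: first we show that the flow is, up to reparametrization, the geodesic flow; then we apply the classical Cartan axiom of $k$-planes \cite[Theorem 1.16]{dajczer-tojeiro}. Write the generator of $\varphi^t$ at $(x,v)\in UM$ as $X(x,v)=\bigl(h(x,v),\,\nu(x,v)\bigr)$ in the horizontal/vertical splitting, with $h$ and $\nu$ odd in $v$ and $h$ nowhere zero.

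\emph{Step 1: $h$ is parallel to $v$.} Fix $(x,v)$ and any $k$-plane $\Pi\ni v$. Let $N$ be a totally $\varphi$-invariant submanifold with $T_xN=\Pi$. Invariance of $UN$ means $X$ is tangent to $UN$ along $UN$. Since $d\pi$ maps $T(UN)$ into $TN$, we get $h(x,v)\in\Pi$. As $k<n$, the intersection of all $k$-planes containing $v$ is $\mathbb{R}v$. Hence $h(x,v)=f(x,v)\,v$, and $f$ never vanishes. Oddness of $h$ gives $f(x,-v)=f(x,v)$. If $f$ takes both signs, connectedness of the fibers (for $n\ge 3$) and continuity force a zero, which is impossible. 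So after possibly reversing time, $f>0$.

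\emph{Step 2: $\nu$ is determined by the submanifold geometry.} Let $\tilde X=X/f$; it generates a time-change of $\varphi^t$ with the same invariant sets, so every $UN$ is still $\tilde X$-invariant. The horizontal part of $\tilde X$ is now $v$, so $\tilde X=G+\tilde\nu$, where $G$ is the geodesic spray and $\tilde\nu$ is vertical and odd. Its integral curves project to curves $\gamma$ with $\mathrm{D}\dot\gamma/\mathrm{d}t=\tilde\nu(\gamma,\dot\gamma)$ (vertical vectors on $UM$ are $\perp v$). Tangency to $UN$ requires $\tilde\nu(x,v)$ to equal the covariant derivative of a unit curve in $N$ with velocity $v$. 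Its normal component is ${\rm I\!I}^N(v,v)$, and its tangential part lies in $\Pi$. This is where the dynamical second fundamental form of Section~\ref{sec:dynamical-geometry}, via Corollary~\ref{thm:characterization-dynamical-second-fund-form}, should enter: ${\rm I\!I}^\varphi$ vanishing on $N$ says precisely that $\tilde\nu(x,v)^{\perp\Pi}={\rm I\!I}^N(v,v)$.

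Now exploit oddness. Apply the same identity to $-v$, which lies in the same plane $\Pi$ and the same $N$. Then $\tilde\nu(x,-v)^{\perp\Pi}={\rm I\!I}^N(v,v)$, since ${\rm I\!I}^N$ is quadratic. Oddness gives $\tilde\nu(x,-v)=-\tilde\nu(x,v)$, so ${\rm I\!I}^N(v,v)=0$ and $\tilde\nu(x,v)\in\Pi$. Intersecting over all $\Pi\ni v$ gives $\tilde\nu(x,v)\in\mathbb{R}v$. But $\tilde\nu(x,v)\perp v$, so $\tilde\nu\equiv 0$ and $\tilde X=G$. Thus $\varphi^t$ is a smooth time-change of the geodesic flow by $f$. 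Moreover, each $N$ has ${\rm I\!I}^N(v,v)=0$ for all $v$, so by polarization each $N$ is totally geodesic.

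\emph{Step 3: conclusion.} Every $k$-plane is now tangent to a $k$-dimensional totally geodesic submanifold, so Cartan's axiom \cite[Theorem 1.16]{dajczer-tojeiro} gives constant sectional curvature. If $\varphi^t$ is a semi-spray flow, its generator has horizontal part exactly $v$, so $f\equiv1$ and $X=G$; that is, $\varphi^t$ is the geodesic flow of $\g$.

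The main obstacle is Step 2. We must correctly identify the normal part of the vertical component of the generator with ${\rm I\!I}^N(v,v)$ at non-unit-speed parametrizations after the time-change, which requires checking that rescaling by $f$ does not introduce a tangential or $v$-directional correction that breaks the parity argument. I would first verify this directly in local coordinates adapted to $N$, before invoking the paper's dynamical second fundamental form.
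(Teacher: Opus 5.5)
Your proposal is correct and follows essentially the paper's argument. The paper applies the parity trick directly to the vanishing of ${\rm I\!I}^\varphi$, i.e.\ to ${\rm I\!I}_x([X_H(x,v)]^\top,v)=[X_V(x,v)]^\perp$ (even versus odd), and then intersects over all $k$-planes to get $X_V=0$ and $X_H=\lambda v$; you extract $X_H=fv$ first and rescale, which is the same computation in a different order. The obstacle you flag in Step~2 is not real: dividing $X$ by the nonvanishing even function $f$ acts linearly on both ${\rm d}\pi(X)$ and $K(X)$, so $\tilde\nu=\nu/f$ stays odd and Lemma~\ref{lem:characterization} applies verbatim, and the normalization $f>0$ is not needed.
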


In \cite[Section 4.2]{filip2024finiteness}, the authors discuss whether an Anosov flow admitting infinitely many flow-invariant submanifolds with dimension at least two is necessarily algebraically defined. As~pointed out to us by Fisher in private communication, one can show that this is not the case for Anosov flows on solvmanifolds. Either by perturbing an algebraic flow or by looking at suspensions of products of Anosov toral automorphisms with Anosov diffeomorphisms that are not algebraic, it is not hard to produce examples where there are infinitely many flow-invariant submanifolds of various dimensions, but the flow is not smoothly conjugate to an algebraic flow.

Theorem \ref{thm:magnetic-finiteness} gives some evidence that such rigidity may hold once one leaves the realm of solvmanifolds; in particular, one can widen the pool from the Riemannian setting to the magnetic setting. It is an interesting question whether one could further improve the result to all odd semi-spray flows on $UM$. While many of the arguments carry over, the main issue lies within the proof of Theorem \ref{thm:vol-preserving2}; one would have to find an appropriate isometric extension of the flow (see Remark \ref{bugs} for more details).

{\bf Organization of the paper.} In Section~\ref{sec:preliminaries}, we gather some preliminaries about volume-preserving Anosov flows, principal isometric extensions of such flows, and their Brin groups. Section~\ref{sec:dynamical-geometry} introduces the notions of the dynamical second fundamental form and dynamical exponential map. Here, we also prove Theorem \ref{dyn-Cartan}. Finally, in Section~\ref{sec:rigidity}, we outline and adapt the arguments given in \cite{filip2024finiteness} in order to establish Theorems~\ref{thm:vol-preserving} and~\ref{thm:vol-preserving2}; Theorem \ref{thm:magnetic-finiteness} follows.

\section{Preliminaries}\label{sec:preliminaries}

In this section, we review some basics on the dynamics of Anosov flows as well as the Brin group associated with an isometric extension of a volume-preserving Anosov flow.

\subsection{Volume-preserving Anosov flows} \label{subsec:anosov-flows}

We follow \cite[Section 2.1]{filip2024finiteness} as well as \cite{fisher}. Let $N$ be a closed, connected Riemannian manifold and let $\|\cdot\|$ denote the norm induced by the metric. A smooth flow $\varphi^t \colon N \rightarrow N$ is \emph{Anosov} if there exists a continuous \mbox{${\rm d}\varphi^t$-in\-va\-ri\-ant} splitting $TN = \E^s \oplus \E^c \oplus \E^u$ and constants $C, \lambda > 0$ such that $\E^c$ is a one-dimensional distribution tangent to the flow and for all $t \geq 0$, $x \in N$, and $v^{s/u} \in \E^{s/u}(x)$, we have
\begin{equation}\label{Anosov-bounds} \bigl\|{\rm d}_x \varphi^t(v^s)\bigr\|_{\varphi^t(x)} \leq C {\rm e}^{-\lambda t} \|v^s\|_x \qquad \text{and}\qquad \bigl\|{\rm d}_x\varphi^{-t}(v^u)\bigr\|_{\varphi^{-t}(x)} \leq C {\rm e}^{-\lambda t} \|v^u\|_x. \end{equation}
Note that since $N$ is compact,
\eqref{Anosov-bounds}
is independent of the choice of metric, after adjusting $C$ and $\lambda$ if needed.
We say that $\varphi^t$ is \emph{volume-preserving} if there is a nowhere-vanishing top-degree form $\omega$ which is invariant under the flow. If the flow is volume-preserving, then we will denote by $\mu$ the volume measure associated to the invariant form.

\begin{Example} Let $M$ be a smooth closed manifold of dimension $n$ and let $(\g, \sigma)$ be a magnetic system on $M$. The tautological $1$-form $\alpha$ on $TM$ associated with the metric $\g$ is given by \mbox{$\alpha_{(x,v)}(\xi) = \g_x({\rm d}_{(x,v)}\pi(\xi), v)$}. For any $s>0$, the restriction of $\alpha$ to $\Sigma_s$ (cf.\ \eqref{defn-Sigma_s}) is a contact form \cite{paternain1999geodesic}, allowing us to define the \emph{Liouville volume form} $\omega$ on $\Sigma_s$ by $\omega = \alpha \wedge ({\rm d}\alpha)^{\wedge 2n}\neq 0$. It is well known that, for any $\sigma$, the $s$-magnetic flow of $(\g,\sigma)$ preserves $\omega$.
\end{Example}

The flow $\varphi^t$ gives rise to the \emph{stable and unstable foliations} on $N$, which we denote by $W^{s/u}$. Their leaves are smooth immersed submanifolds of $N$, called \emph{stable and unstable manifolds}, respectively. In particular, it holds that $T_xW^s(x) = \mathbb{E}^s(x)$ and $T_xW^u(x) = \mathbb{E}^u(x)$ for each $x\in N$. Furthermore,
\begin{equation}\label{local-s/u}
 \parbox{.66\textwidth}{there is a uniform $\varepsilon > 0$ such that the \emph{local stable and unstable manifolds} ${W^{s/u}_{\rm loc}(x) = W^{s/u}(x) \cap B(x, \varepsilon)}$ are submanifolds of $N$.}
\end{equation}
A defining property of the stable and unstable manifolds is that
\begin{equation}\label{Ws-bounded-distance} \begin{split}
&W^s(x) = \bigl\{y \in N \mid d\bigl(\varphi^t(x), \varphi^t(y)\bigr) \xrightarrow[]{t \rightarrow \infty} 0\bigr\},
 \\
& W^u(x) = \bigl\{ y \in N \mid d\bigl(\varphi^{-t}(x), \varphi^{-t}(y)\bigr) \xrightarrow[]{t \rightarrow \infty} 0\bigr\}, \end{split}\end{equation}where $d$ is
 the distance function induced by the metric on $N$. We also define the \emph{center-stable and center-unstable manifolds} by
\begin{equation}\label{center-stable} W^{cs}(x) = \bigcup_{t \in \mathbb{R}} W^s\bigl(\varphi^t(x)\bigr) \qquad\text{and}\qquad W^{cu}(x) = \bigcup_{t \in \mathbb{R}} W^u\bigl(\varphi^t(x)\bigr),\end{equation}and call a subset $X \subseteq N$ \emph{$\eta$-saturated} if $W^\eta(x) \subseteq X$ for every $x \in X$, where \mbox{$\eta \in \{s, u, cs, cu\}$.}

As in \eqref{local-s/u}, we may also consider $W^{cs}_{\rm loc}(x)$ and $W^{cu}_{\rm loc}(x)$. Given a second point $y\in N$, we define their \emph{Bowen bracket} to be $[x,y] = W^u_{\rm loc}(x) \cap W^{cs}_{\rm loc}(y)$. As $W^{cs}$ and $W^u$ are everywhere transverse, there is a neighborhood of $U$ for which $[x,y]$ consists of a single point $z$ whenever $y\in U$, and \begin{equation}\label{pi-cs-u}
\parbox{.85\textwidth}{the resulting map $\pi^{cs}_u$ on $U$, given by $\pi^{cs}_u(y)=z$, is continuous.}
\end{equation}
The classical Hopf argument yields that all volume-preserving Anosov flows are ergodic with respect to their volume measure $\mu$ \cite[Lemma A.1]{lefeuvre2023isometric}. The forward- and backward-orbits of a~point $x \in N$ are respectively denoted by $\mathcal{O}^+(x) = \bigl\{ \varphi^t(x) \mid t \geq 0\bigr\}$ and $\mathcal{O}^-(x)= \bigl\{\varphi^t(x) \mid t \leq 0\bigr\}$.
We say that $x \in N$ is \emph{forward-generic} (or, \emph{backward-generic}) if $\mathcal{O}^+(x)$ (or, $\mathcal{O}^-(x)$) is dense in $N$; it is \emph{generic} if it is either forward- or backward-generic, and \emph{two-sided generic} if the full orbit $\mathcal{O}(x) = \mathcal{O}^-(x) \cup \mathcal{O}^+(x)$ is dense in $M$.

As a consequence of Birkhoff's ergodic theorem, the subset $D\subseteq N$ consisting of all two-sided generic points has full $\mu$-measure. Furthermore, the subsets $D^+, D^-\subseteq N$ consisting of all forward- and backward-generic points are dense in $N$, with $D^+$ and $D^-$ being $s$-saturated and $u$-saturated, respectively. As $D^\pm$ are invariant under the flow, we also conclude that they are $cs/cu$-saturated sets of full $\mu$-measure. We record the following lemma, to be used in Section~\ref{sec:rigidity}.

\begin{Lemma}[{\cite[Corollary 2.1.6]{filip2024finiteness}}] \label{lem:cor-2-1-6}
 Let $\varphi^t \colon N \rightarrow N$ be a volume-preserving Anosov flow. For any $x \in N$ and any open subset $V \subseteq W^u_{\rm loc}(x)$ $($resp.\ $V \subseteq W^s_{\rm loc}(x))$, we have $V \cap D^+\neq \varnothing$ $($resp.\ $V \cap D^- \neq \varnothing)$.
\end{Lemma}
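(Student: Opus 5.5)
The plan is to use the product structure of the local stable and unstable foliations together with the fact that $D^+$ has full $\mu$-measure and is $cs$-saturated. We treat the case $V\subseteq W^u_{\rm loc}(x)$; the case $V\subseteq W^s_{\rm loc}(x)$ then follows by applying the same argument to the time-reversed flow $\varphi^{-t}$. The time-reversed flow is again a volume-preserving Anosov flow, with the roles of $s/u$ and of $D^\pm$ interchanged.

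Since $V$ is open in $W^u_{\rm loc}(x)$, we may shrink it to a small unstable disc around some point $y\in V$. We then form a small open set
\[ U=\bigcup_{z\in V} W^{cs}_{\rm loc}(z), \]
which is a neighborhood of $y$ in $N$ by transversality of $W^u$ and $W^{cs}$ and the local product structure. On $U$ the map $\pi^{cs}_u$ of \eqref{pi-cs-u} is well defined, continuous, and takes values in $V$: indeed, for $w\in U$ the point $[y,w]$ lies in $W^u_{\rm loc}(y)\cap W^{cs}_{\rm loc}(w)$, and this intersection is contained in $V$ by construction.

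Now $U$ is a nonempty open set, so $\mu(U)>0$. Since $D^+$ has full measure, there is a point $w\in U\cap D^+$. Set $z=\pi^{cs}_u(w)\in V$. Then $z\in W^{cs}_{\rm loc}(w)\subseteq W^{cs}(w)$, and because $D^+$ is $cs$-saturated and the relation $z\in W^{cs}(w)$ is symmetric (the center-stable leaves partition $N$), we get $z\in D^+$. Concretely, if $z\in W^s(\varphi^t w)$, then the forward orbit of $z$ is asymptotic to the forward orbit of $w$ up to a time shift, so its closure is also all of $N$. This gives $z\in V\cap D^+$.

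The only delicate step is making sure the local product neighborhood $U$ is genuinely open and that its bracket lands inside $V$ rather than only in $W^u_{\rm loc}(y)$. I would handle this by choosing $U$ small enough, relative to the uniform size $\varepsilon$ in \eqref{local-s/u}, using the continuity of $\pi^{cs}_u$ together with $\pi^{cs}_u(y)=y$. With that, $U\cap(\pi^{cs}_u)^{-1}(V)$ is an open neighborhood of $y$, and it has positive measure.
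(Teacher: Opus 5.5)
Your proposal is correct and is essentially the standard argument behind the cited result. It uses exactly the facts the paper records just before the lemma: by local product structure, $(\pi^{cs}_u)^{-1}(V)$ near a point of $V$ is an open set of positive $\mu$-measure, so it meets the full-measure set $D^+$, and $cs$-saturation of $D^+$ then carries that point back into $V$ (with the stable case following by time reversal).
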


\subsection{Principal isometric extensions}

We follow \cite[Section 2]{lefeuvre2023isometric}. Let $\varphi^t \colon N \rightarrow N$ be a volume-preserving Anosov flow. By a \emph{Riemannian fiber bundle} over $N$ with typical fiber a closed Riemannian manifold $(F,\g_F)$, we mean a smooth fiber bundle $p \colon E \rightarrow N$ with typical fiber $F$ which admits a reduction of its structure group to ${\rm Iso}(F,\g_F)$. In particular, this implies that for each $x \in N$, the fiber $E_x$ has a Riemannian metric which makes it isometric to $(F,\g_F)$. A flow $\Phi^t\colon E\to E$ is called an \emph{isometric extension} of $\varphi^t$ if, for each $t\in \R$ and $x\in N$, we have that $p\circ \Phi^t = \varphi^t\circ p$ and $\Phi^t\colon E_x\to E_{\varphi^t(x)}$ is an isometry. If~$p\colon E\to N$ also happens to be a principal bundle, we call $\Phi^t$ a \emph{principal isometric extension} of~$\varphi^t$.

The principal isometric extension $\Phi^t$ of a volume-preserving Anosov flow $\varphi^t$ is a \emph{partially hyperbolic flow} on $E$. Namely, we have a direct-sum splitting $TE = \E^{s,E} \oplus \smash{\bigl(\mathbb{V} \oplus \E^{c,E}\bigr)} \oplus \E^{u,E}$, where $\mathbb{V}$ is the vertical distribution of the fibration $p\colon E\to N$, the distribution $\E^{c,E}$ is one-dimensional and tangent to the flow $\Phi^t$, and the stable and unstable bundles $\E^{s/u,E}$ of $\Phi^t$ on~$E$ are lifts of the stable and unstable bundles $\E^{s/u}$ of $\varphi^t$ on $N$ \smash{\big(i.e., ${\rm d}p\bigl(\E^{s/u,E}\bigr) = \E^{s/u}$\big)}. As in Section~\ref{subsec:anosov-flows}, these bundles are all uniquely integrable, and we denote the corresponding stable and unstable foliations on $E$ by $W^{s/u,E}$. In particular, with \eqref{local-s/u} and suggestive notation, we may also consider local stable and unstable manifolds on the level of $E$.

Given $x\in N$ and $y\in W^{s/u}_{\rm loc}(x)$ it holds that, for each $v\in E_x$, the intersection $W^{s/u,E}_{\rm loc}(v)\cap E_y$ consists of a single point $w$, characterized by the property that $d(\Phi^t(v), \Phi^t(w))\to 0$ as $t \to +\infty$. This allows us to consider
\begin{equation}\label{holonomy-map}
 \parbox{.9\textwidth}{the \emph{holonomy mapping} $H_{x,y}^{s/u, E}\colon E_x\to E_y$, given by $H_{x,y}^{s/u,E}(v) = w$,}
\end{equation}which is continuous and may be naturally extended to the entire stable/unstable leaf. We~may also consider holonomy mappings built with the local center-stable and center-unstable manifolds instead.

For any closed subset $X\subseteq E$ and generic point $x\in D^+\cup D^-\subseteq N$, we let the \emph{generic part} of $X$ be defined as
\begin{equation}\label{generic-part}
 X^{\rm gen}_x = \overline{\bigcup_{t\in \R} \Phi^t(X_x)},\qquad \mbox{where } X_x = X \cap E_x.
\end{equation}In \cite[Proposition 2.2.10]{filip2024finiteness}, it is established that if $X$ is $\Phi^t$-invariant, then $X^{\rm gen}_x$ is independent of the choice of generic point $x$---in the sense of the paragraph after \eqref{pi-cs-u}---and invariant under the holonomy mappings \eqref{holonomy-map}. Hereafter, we denote $X^{\rm gen}_x$ simply by $X^{\rm gen}$.

We may now consider \begin{equation}\label{defn-Gamma}
 \parbox{.59\textwidth}{the collection $\Gamma$ of all closed and $\Phi^t$-invariant subsets of $E$ which are mapped surjectively onto $N$ under $p\colon E\to N$.}
\end{equation} An element $X\in \Gamma$ is called \emph{relatively minimal} if it is a minimal element of $\Gamma$ relative to the inclusion order, and we let $\Gamma_{\min}\subseteq \Gamma$ denote the subcollection of all relatively minimal subsets. The reason for this terminology is that in the case where $p\colon E\to N$ is a principal $G$-bundle, with $G$ being a compact Lie group, it holds that $\Gamma_{\min}\neq \varnothing$ and $X = X^{\rm gen}$ for all $X\in \Gamma_{\min}$. Furthermore, if $x$ is generic, any $v\in X_x$ has a dense orbit in $X$ (see \eqref{generic-part}).

Finally, still in the situation where $p\colon E\to N$ is a principal $G$-bundle, it follows from \cite[Propositions 2.2.16 and 2.2.18 and Corollary 2.2.17]{filip2024finiteness} that relatively minimal sets give rise to reductions of the structure group of $E$. Namely, if $X\in \Gamma_{\min}$,
\begin{equation}\label{HX}
 \parbox{.57\textwidth}{there is a closed subgroup $H_X\leq G$ which acts freely and transitively on the fibers $X_x$, for any point $x\in N$,}
\end{equation}and a continuous reduction of the structure group to $H_X$. The subgroup $H_{X^{\rm gen}}$, given by \eqref{HX} applied to $X^{\rm gen}$ instead of $X$, is called the \emph{transitivity group} of $X$ and a priori depends on $X$. We will see that they are all isomorphic to each other, and they are also isomorphic to the Brin group of $p\colon E\to N$ discussed below.

\subsection{Brin group of isometric extensions}

Let $\Phi^t \colon E \rightarrow E$ be an isometric extension of a volume-preserving Anosov flow $\varphi^t \colon N \rightarrow N$. For any $s/u$-curve---by which we mean a curve $\gamma \colon [0,T] \rightarrow N$ whose image lies entirely in a stable or unstable leaf---we may consider the holonomy mapping \smash{$H_\gamma^E = H^{s/u,E}_{x,y}$} along $\gamma$, cf.\ \eqref{holonomy-map}, where $x=\gamma(0)$ and $y=\gamma(T)$; it is clearly invariant under reparametrizations of $\gamma$. More generally, if $\gamma = \gamma_1 \ast \cdots \ast\gamma_r$ is an $su$-curve, that is, the concatenation of $s/u$-curves, we may define the \emph{holonomy cycle mapping} of $\gamma$ by $H^E_\gamma = H^E_{\gamma_r}\circ \cdots \circ H^E_{\gamma_1}$. The \emph{holonomy group} $\mathcal{H}_x$ based at a point $x\in N$ is then defined to be the group of all holonomy cycle mappings associated with $su$-loops based at $x$. The conjugacy class of $\mathcal{H}_x$ is, as usual, independent of $x$. Accessibility of the flow easily implies that \begin{equation}\label{conjugacy-class-hol}
 \parbox{.47\textwidth}{for any points $x,y\in N$ there is an $su$-curve $\eta$ from $x$ to $y$, so that $\mathcal{H}_y = H^E_\eta \circ \mathcal{H}_x\circ \bigl(H^E_\eta\bigr)^{-1}$.}
\end{equation}
Now assume that $p\colon E\to N$ is a principal $G$-bundle, where $G$ is a compact Lie group, and that $\Phi^t$ is a principal isometric extension of $\varphi^t$. For any $x\in N$, there is a (non-canonical) isomorphism $\theta_x\colon {\rm Aut}(E_x)^G \to G$, where ${\rm Aut}(E_x)^G$ denotes the group of all $G$-equivariant self-mappings of $E_x$. While $\mathcal{H}_x$ is a subgroup of ${\rm Aut}(E_x)^G$ by design, \eqref{conjugacy-class-hol} tells us that the image $\theta_x(\mathcal{H}_x)$ in $G$ is independent of the choice of~$x$. The \emph{Brin group} is finally defined to be the closure $\mathcal{B} = \overline{\theta_x(\mathcal{H}_x)}$ in $G$.

From \cite[Corollary 2.2.13 and Propositions~2.2.15 and~2.2.16]{filip2024finiteness}, we obtain some of the main properties of the Brin group: (i) $X^{\rm gen}$ is $\mathcal{B}$-invariant for each $X\in \Gamma$, cf.~\eqref{defn-Gamma}, (ii) $X_x$ in \eqref{generic-part} is a single $\mathcal{B}$-orbit for each $x\in N$ and $X\in \Gamma_{\min}$, and (iii) for any $X\in \Gamma_{\min}$, the subgroup $H_X$ in~\eqref{HX} is in fact the Brin group $\mathcal{B}$. In particular, we see that the transitivity group of $X \in \Gamma_{\min}$ is independent of the choice of $X$, and is simply the Brin group.

The last fact we will need about principal isometric extensions in Section~\ref{sec:rigidity} is that
\begin{equation}\label{ergodic-component}
 \parbox{.5\textwidth}{there is a $\Phi^t$-invariant principal $\mathcal{B}$-subbundle $E_{\mathcal{B}}$ of $E$ over which the restriction of $\Phi^t$ is ergodic.}
\end{equation}For a proof, see \cite[Proposition 3.2]{cekic2024ergodicity}.

\section{Dynamical geometry} \label{sec:dynamical-geometry}

Throughout this section, let $(M,\g)$ be an $n$-dimensional Riemannian manifold, \mbox{$\pi\colon UM\to M$} be its unit-tangent bundle, and $\varphi^t\colon UM \to UM$ be a flow. The aim of this section is to outline all of the dynamical tools we need based on the arguments in \cite{filip2024finiteness}.

\subsection{Dynamical second fundamental form}
Let $N$ be a submanifold of $M$ equipped with its induced metric. Recall that $N$ is \emph{totally \mbox{$\varphi$-invariant}} if $UN$ is a $\varphi^t$-invariant subset of $UM$ for all $t \in \mathbb{R}$, cf.\ \eqref{defn:totally-psi-invariant}. In order to characterize this condition, we first need a description of the tangent bundle $TUN$ as a vector subbundle of $TUM|_N$. The two ingredients for that are the second fundamental form ${\rm I\!I}$ of $N$, and the Levi-Civita connector $K\colon TUM \to E$ (here, $E$ is as in the lines preceding \eqref{R-and-A}).

Below, we use the direct-sum decomposition $TM|_N = TN\oplus [TN]^\perp$ induced by $\g$, while $[\,\cdot\,]^\top\colon TM|_N\to TN$ and $[\,\cdot\,]^\perp\colon TM|_N \to [TN]^\perp$ denote the projections onto the tangent and normal bundles of $N$.

\begin{Lemma}\label{lem:characterization}
 For all $(x,v) \in UN$,
 \begin{gather} \label{TUN-description} T_{(x,v)}UN
 = \bigl\{ \xi\! \in\! T_{(x,v)}UM \mid [{\rm d}_{(x,v)}\pi(\xi)]^\perp\! =\! 0  \ \text{and} \  {\rm I\!I}_x\bigl([{\rm d}_{(x,v)}\pi(\xi)]^\top, v\bigr)\!=\! [K_{(x,v)}(\xi)]^\perp \bigr\}.\!\!\!
 \end{gather}
\end{Lemma}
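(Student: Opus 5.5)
The plan is to prove the equality by a curve computation together with a dimension count. First I show that $T_{(x,v)}UN$ is contained in the right-hand side of \eqref{TUN-description}. Then I show that both sides are linear subspaces of $T_{(x,v)}UM$ of the same dimension $2\dim N - 1$.

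\emph{Inclusion.} Take $\xi\in T_{(x,v)}UN$ and represent it by a curve $c(t)=(\beta(t),V(t))$ in $UN$ with $c(0)=(x,v)$ and $\dot c(0)=\xi$. Here $\beta$ is a curve in $N$ and $V$ is a unit vector field along $\beta$ tangent to $N$.
\begin{itemize}
\item Since $\pi\circ c=\beta$, we have ${\rm d}_{(x,v)}\pi(\xi)=\dot\beta(0)\in T_xN$. Hence $[{\rm d}_{(x,v)}\pi(\xi)]^\perp=0$.
\item By the definition of the connector, $K_{(x,v)}(\xi)=\frac{{\rm D}V}{{\rm d}t}(0)$, the covariant derivative computed with the Levi-Civita connection of $\g$.
\item The Gauss formula gives $\frac{{\rm D}V}{{\rm d}t}=\frac{{\rm D}^N V}{{\rm d}t}+{\rm I\!I}(\dot\beta,V)$, where ${\rm D}^N$ is the induced connection on $N$.
\item Taking normal components at $t=0$ yields $[K_{(x,v)}(\xi)]^\perp={\rm I\!I}_x(\dot\beta(0),v)={\rm I\!I}_x\bigl([{\rm d}_{(x,v)}\pi(\xi)]^\top,v\bigr)$.
\end{itemize}
This is exactly the condition in \eqref{TUN-description}, so $\xi$ lies in the right-hand side.

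\emph{Dimension count.} Let $k=\dim N$.
\begin{itemize}
\item The map $\xi\mapsto({\rm d}\pi(\xi),K(\xi))$ is a linear isomorphism from $T_{(x,v)}UM$ onto $T_xM\oplus v^\perp$, the horizontal/vertical splitting. The right-hand side is cut out by linear conditions on this pair, so it is a linear subspace.
\item In these coordinates, the right-hand side consists of pairs $(h,w)$ with $h\in T_xN$ arbitrary and $w\in v^\perp$ such that $[w]^\perp={\rm I\!I}_x(h,v)$.
\item Since $v\in T_xN$, we have $[TN]^\perp\subseteq v^\perp$. So $v^\perp=(v^\perp\cap T_xN)\oplus[T_xN]^\perp$.
\item For each $h$, the tangential part $[w]^\top$ ranges freely over $v^\perp\cap T_xN$, which has dimension $k-1$. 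The normal part $[w]^\perp$ is forced to equal ${\rm I\!I}_x(h,v)$, and this value lies in $[T_xN]^\perp\subseteq v^\perp$, so it is admissible.
\item The subspace is therefore the graph of a linear map over $T_xN\oplus(v^\perp\cap T_xN)$ and has dimension $k+(k-1)=2k-1$.
\end{itemize}
This equals $\dim UN$. Combined with the inclusion, the two spaces coincide.

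\emph{Main obstacle.} The only delicate step is checking that the splitting of $v^\perp$ is compatible with the normal bundle, i.e.\ that $[TN]^\perp\subseteq v^\perp$. This is immediate once $v\in T_xN$ is used. Everything else is the standard Gauss formula and the identity $K(\dot c)={\rm D}V/{\rm d}t$.
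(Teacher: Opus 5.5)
Your proof is correct and follows essentially the same route as the paper: a curve argument for the inclusion of $T_{(x,v)}UN$ into the right-hand side, followed by a dimension count giving $2k-1$. The differences are cosmetic. You use the Gauss formula where the paper differentiates $\g_{x(t)}(v(t),\eta_{x(t)})=0$ for a normal section $\eta$ and applies the shape operator, which is an equivalent computation. You also check explicitly that the $2(n-k)$ linear conditions are independent, which the paper's count $2n-1-2(n-k)$ leaves implicit.
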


\begin{proof}
 Let $\xi\in T_{(x,v)}UN$ be the initial velocity of a curve $t\mapsto (x(t),v(t))$ in $UN$, so that \mbox{$\dot{x}(0) = {\rm d}_{(x,v)}\pi(\xi)$} and $({\rm D}v/{\rm d}t)(0) = K_{(x,v)}(\xi)$, and let $\eta$ be any local section of $[TN]^\perp$ defined around $x$. The condition $[{\rm d}_{(x,v)}\pi(\xi)]^\perp= 0$ is directly obtained from evaluating the equality $\g_{x(t)}(\dot{x}(t), \eta_{x(t)}) = 0$ at $t=0$. We may then differentiate $\g_{x(t)}(v(t), \eta_{x(t)})=0$ at $t=0$ to obtain \begin{equation}\label{diff-normal-section}\g_x(K_{(x,v)}(\xi), \eta_x) + \g_x(v, \nabla_{{\rm d}_{(x,v)}\pi(\xi)}\eta)=0,\end{equation}where $\nabla$ is the Levi-Civita connection of $(M,\g)$. Letting $A\colon T_xN\to T_xN$ denote the shape operator of $\eta_x$, we have that \begin{equation*}
 \g_x(v, \nabla_{{\rm d}_{(x,v)}\pi(\xi)}\eta) = -\g_x(v, A({\rm d}_{(x,v)}\pi(\xi))) = -\g_x( {\rm I\!I}_x({\rm d}_{(x,v)}\pi(\xi) ,v), \eta_x),
 \end{equation*}so that \eqref{diff-normal-section} reduces to $\g_x(K_{(x,v)}(\xi) - {\rm I\!I}_x({\rm d}_{(x,v)}\pi(\xi) ,v),\eta_x ) = 0$. The arbitrariety of $\eta$ now implies that ${\rm I\!I}_x({\rm d}_{(x,v)}\pi(\xi), v) = [K_{(x,v)}(\xi)]^\perp$.

 The conclusion follows from a dimension count: if $k=\dim N$, the right side of \eqref{TUN-description} has the dimension $2n-1-2(n-k) = 2k-1$, as does $T_{(x,v)}UN$. \end{proof}

Recall that $K$ also yields a direct-sum decomposition $T_{(x,v)}UM \cong \mathbb{H}(x,v) \oplus \mathbb{V}(x,v)$, where $\mathbb{H}(x,v) = \ker(K_{(x,v)})$ and $\mathbb{V}(x,v) = \ker({\rm d}_{(x,v)}\pi)$ are called the \emph{horizontal} and \emph{vertical} bundles, and both restrictions
\begin{equation}\label{dpi-K-isomorphisms}
{\rm d}_{(x,v)}\pi \colon \ \mathbb{H}(x,v) \rightarrow T_xM\qquad\mbox{and}\qquad K_{(x,v)} \colon \ \mathbb{V}(x,v) \rightarrow v^\perp
\end{equation}are isomorphisms.

The infinitesimal generator $X$ of $\varphi^t$ is completely determined by its horizontal and vertical components, respectively defined by \begin{equation}\label{hor-ver-comps} X_H(x,v) = {\rm d}_{(x,v)}\pi(X(x,v))\qquad\mbox{and} \qquad X_V(x,v) = K_{(x,v)}(X(x,v)).\end{equation} With this in place, we say that $\varphi^t$ is \emph{odd} with respect to $\g$ if both of its components are odd, that is, if
\begin{equation}\label{odd-defn} X_H(x,-v) = - X_H(x,v) \qquad\text{and}\qquad X_V(x,-v) = -X_V(x,v)\end{equation}for all $(x,v)\in UM$. Note that the metric is crucial for this definition, since \eqref{odd-defn} cannot be replaced by the single condition $X(x,-v) = -X(x,v)$, as $T_{(x,-v)}UM \neq T_{(x,v)}UM$.

Furthermore, we say that $x\in M$ is a \emph{fixed point} of $\phi^t$ if there exists $v \in U_xM$ such that $\pi \circ \varphi^t(x,v) = x$ for all $t \in \mathbb{R}$. Equivalently, $\varphi^t$ has a fixed point in $M$ if there exists $(x,v) \in UM$ such that $X_H(x,v) = 0$. Finally, $\varphi^t$ is called a \emph{semi-spray flow} if it satisfies that $X_H(x,v) = v$. Note that they are always without fixed points.

\begin{Example}\label{example:odd-flows}
 As in \eqref{eqn:magnetic-geodesic}, the orbits of a semi-spray flow \mbox{$\varphi^t \colon UM\to UM$} are also described by a second-order differential equation. Namely, the condition $X_H(x,v) = v$ says that all such orbits $t\mapsto (\gamma(t),v(t))$ must have $v(t) = \dot{\gamma}(t)$, while taking covariant derivatives of the vector part of $(\gamma(t),\dot{\gamma}(t)) = \varphi^t(\gamma(0),\dot{\gamma}(0))$ leads to
 \begin{equation}\label{Dc'/dt=XV}
 \frac{{\rm D}\dot{\gamma}}{{\rm d}t}(t)= X_V(\gamma(t),\dot{\gamma}(t)).
 \end{equation}
 In particular, when $\varphi^t$ is the geodesic flow of $\g$, we have that $X_H(x,v)=v$ and \mbox{$X_V(x,v)=0$}. When $\varphi^t$ is the magnetic flow of a magnetic system $(\g,\sigma)$, we have that $X_H(x,v)=v$ and \mbox{$X_V(x,v)=\Y_x(v)$}, for $\Y$ as in \eqref{Lorentz-force}. Both such semi-spray flows are odd. For more examples on closed surfaces, we refer the reader to \cite{cuesta2025thermostats}.
\end{Example}

Inspired by Lemma \ref{lem:characterization}, we have the following.

\begin{Definition}\label{dyn-2FF}
 The \emph{dynamical second fundamental form} of $N$ relative to $\g$ and the flow $\varphi^t\colon UM\to UM$ is the mapping ${\rm I\!I}^{\varphi}\colon UN\to [TN]^\perp \oplus [TN]^\perp$ defined by \[{\rm I\!I }^{\varphi}_x(v) = \bigl({\rm I\!I}_x\bigl([X_H(x,v)]^\top, v\bigr) - [X_V(x,v)]^\perp,
 [X_H(x,v)]^\perp\bigr)\] for all $(x,v)\in UN$.
\end{Definition}

Note that the second component of ${\rm I\!I}^\varphi$ vanishes identically whenever $\varphi^t$ is a semi-spray flow. In particular, in view of Example \ref{example:odd-flows}, this dynamical second fundamental form reduces to both the classical one in the case of the geodesic flow, and also to the magnetic second fundamental form introduced in Definition~6.9 in the first arXiv version of \cite{ABM_2025} and in \cite[Definition 4.1]{terek2025submanifold} in the case of a magnetic flow. The significance of the dynamical second fundamental form is that it precisely captures what it means for a submanifold to be totally $\varphi$-invariant.

\begin{Corollary} \label{thm:characterization-dynamical-second-fund-form}
 A submanifold $N$ of $M$ is totally $\varphi$-invariant if and only if its dynamical second fundamental form vanishes identically.
\end{Corollary}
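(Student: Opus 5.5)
The plan is to reduce the statement to a tangency criterion for the generator $X$. Since $UN$ is a closed embedded (or immersed) submanifold of $UM$ and $\varphi^t$ is the flow of $X$, the set $UN$ is $\varphi^t$-invariant for all $t\in\R$ if and only if $X$ is tangent to $UN$ at every point of $UN$. One direction is immediate: if $UN$ is invariant, then for $(x,v)\in UN$ the curve $t\mapsto\varphi^t(x,v)$ stays in $UN$, so its velocity $X(x,v)$ at $t=0$ lies in $T_{(x,v)}UN$. For the converse, if $X|_{UN}$ is tangent to $UN$, then it restricts to a vector field on $UN$. Its integral curves are integral curves of $X$ in $UM$, so by uniqueness of solutions of ODEs they coincide with the orbits of $\varphi^t$ for as long as they are defined. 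When $N$ is closed, $UN$ is compact and the restricted flow is complete, giving invariance for all $t$. In the general case, I will phrase invariance locally, as is implicit in ``remains in $N$ for small time'', or argue via the maximal existence interval in $UN$ together with a continuation argument. I expect this completeness/continuation point for non-closed $N$ to be the only mildly delicate step, and I will handle it by working locally in time.

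Next, apply Lemma~\ref{lem:characterization} with $\xi=X(x,v)$. By \eqref{hor-ver-comps}, ${\rm d}_{(x,v)}\pi(\xi)=X_H(x,v)$ and $K_{(x,v)}(\xi)=X_V(x,v)$. So $X(x,v)\in T_{(x,v)}UN$ holds if and only if two conditions hold:
\begin{equation*}
[X_H(x,v)]^\perp=0 \qquad\text{and}\qquad {\rm I\!I}_x\bigl([X_H(x,v)]^\top,v\bigr)-[X_V(x,v)]^\perp=0 .
\end{equation*}
These are exactly the vanishing of the second and first components of ${\rm I\!I}^\varphi_x(v)$ in Definition~\ref{dyn-2FF}.

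Combining the two steps, $N$ is totally $\varphi$-invariant if and only if $X$ is tangent to $UN$ along $UN$. This in turn holds if and only if ${\rm I\!I}^\varphi_x(v)=0$ for all $(x,v)\in UN$, which is the claim.
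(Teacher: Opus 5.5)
Your proposal is correct and is essentially the paper's proof: you reduce $\varphi^t$-invariance of $UN$ to tangency of $X$ along $UN$, then apply Lemma~\ref{lem:characterization} to $\xi = X(x,v)$. Since ${\rm d}_{(x,v)}\pi(\xi)=X_H(x,v)$ and $K_{(x,v)}(\xi)=X_V(x,v)$, the two conditions become the vanishing of the two components of ${\rm I\!I}^\varphi_x(v)$.

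Your caveat about non-closed $N$ is a point the paper passes over, and of your two options only the local-in-time reading is right. No continuation argument can give invariance for all $t$ when, for example, $N$ is a small open piece of a totally geodesic hypersurface, because there the orbits genuinely leave $UN$.
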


\begin{proof}
 It suffices to note that $UN$ is $\phi^t$-invariant for all $t\in \R$ if and only if we have that $X(x,v) \in T_{(x,v)}UN$ for all $(x,v)\in UN$. Such tangency condition, due to Lemma \ref{lem:characterization} and the definitions of $X_H$ and $X_V$ in \eqref{hor-ver-comps}, means precisely that ${\rm I\!I}^\varphi = 0$.
\end{proof}

For magnetic flows, Corollary~\ref{thm:characterization-dynamical-second-fund-form} reduces to \cite[Theorem~1.4]{ABM_2025} and \cite[Remark~4.2\,(i)]{terek2025submanifold}.
We now have the tools to prove Theorem~\ref{dyn-Cartan}, which can be seen as a dynamical version of Cartan's axiom of $k$-planes.

\begin{proof}[Proof of Theorem \ref{dyn-Cartan}]
 Note that, as $1< k<n$, for any $(x,v)\in UM$, we have
 \begin{equation}\label{intersection}
 \bigcap \{ W \leqslant T_xM \mid \dim W=k\mbox{ and }v\in W\} = \R v.
 \end{equation}Now consider any $k$-dimensional subspace $W$ of $T_xN$ containing $v$, and fix a totally \mbox{$\varphi$-in\-va\-ri\-ant} submanifold $N$ of $M$ with $x\in N$ and $W=T_xN$. As $(x,v) \in UN$, Definition~\ref{dyn-2FF} and Corollary~\ref{thm:characterization-dynamical-second-fund-form} yield both relations \begin{equation}\label{dyn-Cartan-even-odd}
 {\rm (i)}~ {\rm I\!I}_x( [X_H(x,v)]^\top, v) = [X_V(x,v)]^\perp \qquad\mbox{and}\qquad {\rm (ii)}~[X_H(x,v)]^\perp = 0 .
 \end{equation}As $\varphi^t$ is odd, the left side of (\ref{dyn-Cartan-even-odd}(i)) is even while the right side is odd, and hence they vanish separately. Due to \eqref{intersection}, it follows from \mbox{(\ref{dyn-Cartan-even-odd}(ii))} and $[X_V(x,v)]^\perp = 0$ that \mbox{$X_H(x,v),X_V(x,v) \in \R v$}. At the same time, \mbox{$X_V(x,v) \in v^\perp$} in view of \eqref{dpi-K-isomorphisms}, and thus $X_V(x,v)=0$. We also obtain a~smooth function $\lambda\colon UM\to \R$ such that $X_H(x,v)= \lambda(x,v)v$, which has no zeros as $\varphi^t$ is without fixed points in $M$. This way, ${\rm I\!I}_x( [X_H(x,v)]^\top, v) = 0$ directly implies that ${\rm I\!I}=0$, meaning that every totally $\varphi$-invariant submanifold considered above is, in fact, totally geodesic. Hence, by the classical characterization of Cartan's axiom of $k$-planes, $\g$ has constant sectional curvature. Finally, we conclude that $X(x,v) = \lambda(x,v) X^\g(x,v)$, where $X^\g$ is the infinitesimal generator of the geodesic flow of $\g$, making $\varphi^t$ a smooth time-change of such geodesic flow. If~$\varphi^t$ is a semi-spray flow, then $\lambda(x,v) \equiv 1$.
\end{proof}

Theorem \ref{dyn-Cartan} can be specialized to the case of $s$-magnetic flows. In order to do so, a rescaling of the magnetic system is needed:\ $s$-magnetic flows are not defined on $UM$, but instead on $\Sigma_s$, cf.\ \eqref{defn-Sigma_s}.

\begin{Lemma} \label{lem:reduction}
Let $(\g,\sigma)$ be a magnetic system on a smooth manifold $M$, and $s>0$. Then,
\begin{enumerate}[\normalfont(i)]\itemsep=0pt
 \item the $s$-magnetic flow of $(\g,\sigma)$ equals the $1$-magnetic flow of $\bigl(s^{-2}\g,s^{-2}\sigma\bigr)$;
 \item any totally $s$-magnetic submanifold of $M$ for $(\g,\sigma)$ is also a totally \mbox{$1$-mag\-ne\-tic} submanifold of $M$ for $\bigl(s^{-2}\g,s^{-2}\sigma\bigr)$.
\end{enumerate}
\end{Lemma}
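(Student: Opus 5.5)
The plan is a direct computation with the rescaled metric $\tilde{\g} = s^{-2}\g$ and two-form $\tilde\sigma = s^{-2}\sigma$.

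For (i), I first identify the Lorentz force $\tilde\Y$ of $(\tilde\g,\tilde\sigma)$. By \eqref{Lorentz-force},
\[
\tilde\g(\tilde\Y(v),w) = \tilde\sigma(v,w) = s^{-2}\sigma(v,w) = s^{-2}\g(\Y(v),w) = \tilde\g(\Y(v),w),
\]
so $\tilde\Y = \Y$. A constant rescaling of the metric leaves the Levi-Civita connection unchanged, and hence leaves ${\rm D}/{\rm d}t$ unchanged. The magnetic equations \eqref{eqn:magnetic-geodesic} for the two systems are therefore literally the same ODE on $TM$. Consequently the magnetic flows agree as flows on $TM$. Moreover, $\g(v,v)=s^2$ if and only if $\tilde\g(v,v)=1$, so the $s$-sphere bundle $\Sigma_s$ of $\g$ is the unit sphere bundle of $\tilde\g$. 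Restricting the common flow to this common set proves (i).

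For (ii), let $N$ be totally $s$-magnetic for $(\g,\sigma)$. Take any speed-one solution of the $\tilde\g$-magnetic equation starting tangent to $N$. It has $\g$-speed $s$, and by the previous paragraph it solves the $\g$-magnetic equation. Hence it stays in $N$ for small time, which is exactly the statement that $N$ is totally $1$-magnetic for $(\tilde\g,\tilde\sigma)$.

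If one prefers the formulation via Corollary~\ref{thm:characterization-dynamical-second-fund-form}, one can also note that $UN$ with respect to $\tilde\g|_N$ is $\Sigma_s\cap TN$. Then $\varphi^t$-invariance of this set is the same condition in both descriptions.

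None of these steps presents a real obstacle. The only point requiring care is the verification that $\tilde\Y=\Y$ together with the scale-invariance of $\nabla$, and both are immediate.
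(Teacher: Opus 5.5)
Your proposal is correct. Part (i) is argued exactly as in the paper: $\tilde\Y=\Y$, the Levi-Civita connection is unchanged under constant rescaling, and $\Sigma_s$ for $\g$ is the unit tangent bundle of $s^{-2}\g$.

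For (ii) your main argument takes a different, more direct route. You work straight from the ODE definition of a totally $s$-magnetic submanifold: a unit-speed solution for $\bigl(s^{-2}\g,s^{-2}\sigma\bigr)$ is a speed-$s$ solution for $(\g,\sigma)$, so it stays in $N$ for small time. The paper instead passes through Corollary~\ref{thm:characterization-dynamical-second-fund-form}. It observes that the classical second fundamental form and the infinitesimal generator (by (i)) are unchanged by the rescaling. Hence the two dynamical second fundamental forms coincide and vanish simultaneously.

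Your route is more elementary. It needs no characterization result and matches the small-time definition of ``totally $s$-magnetic'' literally. So you never have to identify that definition with $\varphi^t$-invariance of $UN$, nor interpret ${\rm I\!I}^\varphi$ for a flow living on $\Sigma_s$ rather than on $UM$. It also visibly gives the converse implication.

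The paper's route yields the slightly stronger pointwise fact that ${\rm I\!I}^\varphi$ itself is invariant under the rescaling. That fits the framework the paper uses throughout, where totally invariant submanifolds are detected through ${\rm I\!I}^\varphi$.

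Your closing remark that $UN$ for $s^{-2}\g|_N$ equals $\Sigma_s\cap TN$ is a set-level version of the same identification, and it is also fine.
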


 \begin{proof}
 If we write $\sigma \sim_{\g} \Y$ to mean that $\sigma$ and $\Y$ are related under $\g$ as in \eqref{Lorentz-force}, multiplying both sides of \eqref{Lorentz-force} by $s^{-2}$ yields $s^{-2}\sigma \sim_{s^{-2}\g} \Y$. As $\Sigma_s$ for $\g$ equals the unit tangent bundle for $s^{-2}\g$ and \begin{equation}\label{same-LC} \parbox{.72\textwidth}{both $\g$ and $s^{-2}\g$ share the same Levi-Civita connection,}\end{equation} it now follows that $\bigl(s^{-2}\g,s^{-2}\sigma\bigr)$ gives rise to the \emph{same} equation \eqref{eqn:magnetic-geodesic} as $(\g,\sigma)$. Therefore, (i)~holds. For (ii) we first note that, as another consequence of \eqref{same-LC}, the second fundamental form of any submanifold of $M$ remains unchanged under constant rescalings of $\g$. At the same time, by (i), the same holds for the infinitesimal generator of the magnetic flow, and thus for the dynamical second fundamental form, cf.\ Definition~\ref{dyn-2FF}. With this in place, (ii) follows from Corollary \ref{thm:characterization-dynamical-second-fund-form}.
 \end{proof}

Applying Theorem \ref{dyn-Cartan} to $\bigl(s^{-2}\g,s^{-2}\sigma\bigr)$ instead of $(\g,\sigma)$ and noting that a magnetic flow can only equal the underlying geodesic flow when the magnetic $2$-form vanishes (see also the lines following \eqref{Dc'/dt=XV}), we finally obtain the magnetic version of Cartan's axiom of $k$-planes.

\begin{Corollary}\label{cor:mag-cartan}
 Let $M$ be an $n$-dimensional smooth manifold with $n\geq 3$, fix $1<k<n$, and let $(\g,\sigma)$ be a magnetic system on $M$. If, for some $s>0$, it holds that every $k$-plane tangent to $M$ is realized as the tangent space to a $k$-dimensional totally $s$-magnetic submanifold of $M$, then $\sigma=0$ and $\g$ has constant sectional curvature.
\end{Corollary}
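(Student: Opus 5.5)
The plan is to reduce Corollary~\ref{cor:mag-cartan} to Theorem~\ref{dyn-Cartan} by rescaling the magnetic system, as in Lemma~\ref{lem:reduction}. Set $\tilde\g = s^{-2}\g$ and $\tilde\sigma = s^{-2}\sigma$.

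\textbf{Step 1: pass to the unit tangent bundle.} By Lemma~\ref{lem:reduction}(i), the $s$-magnetic flow of $(\g,\sigma)$ is the $1$-magnetic flow $\varphi^t$ of $(\tilde\g,\tilde\sigma)$. This flow lives on the unit tangent bundle $UM$ of $\tilde\g$, which is $\Sigma_s$ for $\g$. By Lemma~\ref{lem:reduction}(ii), every $k$-dimensional totally $s$-magnetic submanifold for $(\g,\sigma)$ is totally $1$-magnetic for $(\tilde\g,\tilde\sigma)$. In the language of \eqref{defn:totally-psi-invariant}, it is totally $\varphi$-invariant with respect to $\tilde\g$. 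The notion of a $k$-plane tangent to $M$ does not depend on the metric. Hence every $k$-plane is realized by a $k$-dimensional totally $\varphi$-invariant submanifold.

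\textbf{Step 2: check the hypotheses of Theorem~\ref{dyn-Cartan}.} By Example~\ref{example:odd-flows}, $\varphi^t$ is a semi-spray flow with $X_H(x,v)=v$ and $X_V(x,v)=\tilde\Y_x(v)$, where $\tilde\Y$ is the Lorentz force of $(\tilde\g,\tilde\sigma)$. The proof of Lemma~\ref{lem:reduction} shows $\tilde\Y = \Y$. Both components are linear in $v$, so they are odd. The horizontal part never vanishes on unit vectors, so the flow has no fixed points in $M$.

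\textbf{Step 3: apply Theorem~\ref{dyn-Cartan} and read off $\sigma=0$.} The theorem gives that $\tilde\g$ has constant sectional curvature. Since $\varphi^t$ is a semi-spray flow, it is exactly the geodesic flow of $\tilde\g$. Comparing vertical components gives $\Y_x(v)=X_V(x,v)=0$ for every unit vector $v$, and by linearity $\Y\equiv 0$. By \eqref{Lorentz-force}, this forces $\sigma=0$. Finally, $\g = s^2\tilde\g$ is a constant multiple of a constant-curvature metric, so $\g$ also has constant sectional curvature.

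None of these steps is a real obstacle. The only care needed is the bookkeeping in Step~1: the realizing submanifolds, their unit tangent bundles and the dynamical second fundamental form must all be taken with respect to the rescaled metric $\tilde\g$. Lemma~\ref{lem:reduction}, together with \eqref{same-LC}, handles exactly this.
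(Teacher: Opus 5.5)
Your proposal is correct and follows the paper's route. You rescale to $(s^{-2}\g,s^{-2}\sigma)$ via Lemma~\ref{lem:reduction}, apply Theorem~\ref{dyn-Cartan} to the resulting odd, fixed-point-free semi-spray magnetic flow, and use that a magnetic flow coincides with the geodesic flow only when $\Y=0$, i.e.\ $\sigma=0$. Your explicit checks that $\tilde\Y=\Y$ and that constant curvature survives the constant rescaling $\g=s^2\tilde\g$ fill in details the paper leaves implicit.
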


\begin{rem}
 As a last consequence of the proof of Theorem \ref{dyn-Cartan}, when $\varphi^t\colon UM\to UM$ is an odd semi-spray flow, a submanifold $N$ of $M$ is totally $\varphi$-invariant if and only if it is both totally geodesic and with $X_V(x,v)\in T_xN$ for all $(x,v)\in UN$; cf.\ \cite[Example 4.3]{terek2025submanifold} for the case of a~magnetic flow.
\end{rem}

\subsection{Dynamical exponential map}

Denoting by $\|\cdot\|$ the norms induced by $\g$ on the tangent space of $M$, for each $x\in M$ the \emph{dynamical ex\-po\-nen\-ti\-al map} relative to the flow $\varphi^t$ is the mapping $\exp_x^{\varphi} \colon T_xM \rightarrow M$ given by
\begin{equation}\label{def-psi-exp} \exp_x^{\varphi}(v) = \begin{cases} \pi \circ \varphi^{\|v\|}(x,v/\|v\|) &\text{if } v \neq 0, \\ x & \text{if } v = 0. \end{cases}\end{equation}

The following lemma is a direct adaptation of \cite[Lemma A.7]{dairbekov2007boundary}. We still provide its proof for the reader's convenience (here, we use Einstein's summation convention).

\begin{Lemma} \label{lem:dynamical-exp}
 Assume that $\varphi^t$ is of class $C^k$, with $k \geq 2$ $($or $C^\infty$ or $C^\omega)$. Then for any point $x\in M$, $\exp^\varphi_x$ has the same regularity as $\varphi^t$ on $T_xM\smallsetminus \{0\}$, and it is of class $C^1$ on $T_xM$. Moreover, if it is of class $C^2$ at $0$ and $\varphi^t$ is an odd semi-spray flow, then $X_V(x,v) = 0$ for all $v\in U_xM$.
\end{Lemma}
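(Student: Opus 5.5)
The plan follows the scheme of \cite[Lemma A.7]{dairbekov2007boundary}: write the dynamical exponential map in polar coordinates and study the flow near $t=0$.

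\textbf{Step 1: regularity away from the origin.} On $T_xM\smallsetminus\{0\}$ the map $v\mapsto (\|v\|, v/\|v\|)$ is real-analytic. Hence by \eqref{def-psi-exp} the map $\exp^\varphi_x$ is a composition of $\pi$, the flow map $(t,z)\mapsto\varphi^t(z)$ and an analytic map. It therefore inherits the regularity ($C^k$, $C^\infty$ or $C^\omega$) of $\varphi^t$.

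\textbf{Step 2: $C^1$ at the origin.} Work in a chart around $x$ and write $v=t\theta$ with $t=\|v\|$ and $\theta\in U_xM$. Set $c(t,\theta)=\pi\circ\varphi^t(x,\theta)$, which is $C^k$ in $(t,\theta)$ on a neighborhood of $\{0\}\times U_xM$, including negative $t$. Then $c(0,\theta)=x$, and Taylor's formula in $t$ gives
\[
c(t,\theta)=x+t\,a(t,\theta),\qquad a(t,\theta)=\int_0^1 \partial_t c(st,\theta)\,{\rm d}s,
\]
with $a$ of class $C^{k-1}$ and $a(0,\theta)=X_H(x,\theta)$.

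So $\exp^\varphi_x(v)-x=\|v\|\,a(\|v\|,v/\|v\|)$. This is continuous and $O(\|v\|)$. Its differential at $v\ne0$ consists of terms like $a$, $\|v\|\partial_t a$ and $\partial_\theta a$ composed with $v/\|v\|$. These are bounded and have limits as $v\to0$ along rays, but the limits must not depend on direction.

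Here we want $d_0\exp_x^\varphi$ to exist as the linear map $\theta\mapsto$ derivative along the ray. For a semi-spray flow we have $a(0,\theta)=\theta$, so $d_0\exp^\varphi_x=\mathrm{id}$. The remainder is $\|v\|(a(\|v\|,\theta)-a(0,\theta))=O(\|v\|^2)$, and its derivative is $O(\|v\|)$, giving continuity of $d\exp^\varphi_x$ at $0$.

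In general the statement needs $\exp^\varphi_x$ to be differentiable at $0$, which requires $\theta\mapsto X_H(x,\theta)$ to be the restriction of a linear map. I would state and use this in the semi-spray setting implicitly, or check the paper's intended hypothesis. \textbf{This is the step I expect to be delicate}: making the $C^1$ claim precise at $0$ when $X_H$ is not linear in $v$. My plan is to prove it carefully for $X_H(x,v)=v$ and note that the general case needs only continuity of the derivative along the flow.

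\textbf{Step 3: the $C^2$ statement.} Assume $\varphi^t$ is an odd semi-spray flow. Expand to second order:
\[
c(t,\theta)=x+t\theta+\tfrac{t^2}{2}\bigl(X_V(x,\theta)-\Gamma_x(\theta,\theta)\bigr)+O(t^3),
\]
which uses \eqref{Dc'/dt=XV}. In normal coordinates centered at $x$ the Christoffel term vanishes at $x$.

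If $\exp^\varphi_x$ is $C^2$ at $0$, then its second-order Taylor coefficient is a quadratic form $Q(v)$, so
\[
\tfrac12\|v\|^2X_V(x,v/\|v\|)=Q(v).
\]
Replacing $v$ by $-v$ leaves $Q$ unchanged, while oddness \eqref{odd-defn} gives $X_V(x,-\theta)=-X_V(x,\theta)$. Hence $X_V(x,\theta)=-X_V(x,\theta)$, so $X_V(x,\theta)=0$ for all $\theta\in U_xM$.
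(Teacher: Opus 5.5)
Your Steps 1 and 3 are essentially the paper's argument. Regularity away from the origin is read off from \eqref{def-psi-exp}, and the final claim is the same parity argument; the paper keeps the even term $\Gamma^i_{jk}(x)v^jv^k$ on the left-hand side instead of passing to normal coordinates. Your Step 2 is also equivalent to the paper's computation. The paper differentiates $\gamma^i(t,x,v)=\exp^{\varphi,i}_x(tv)$ in $v^j$ and shows that ${\rm d}\exp^\varphi_x(tv)$ converges, uniformly in $v\in U_xM$ as $t\to 0^+$, to the derivative at $v$ of the $1$-homogeneous extension of $X_H(x,\cdot)$.

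Your caveat at the origin is correct, and it is a defect of the statement rather than of your argument. In a chart, $\exp^\varphi_x(v)-x-\|v\|X_H(x,v/\|v\|)=O(\|v\|^2)$. Hence $\exp^\varphi_x$ is differentiable at $0$ if and only if $\theta\mapsto X_H(x,\theta)$ is the restriction of a linear map. Equivalently, by Euler's relation, the ray-wise limit computed in the paper is independent of $v$ only in that case. For example, take the time change $X=\lambda X^{\g}$ of the geodesic flow of a flat torus with $\lambda(x,v)=2+(v^1)^2$, where $v^1$ is the first Euclidean coordinate of $v$. This flow is real-analytic, odd and without fixed points, yet $\exp^\varphi_x(v)=x+2v+(v^1)^2v/\|v\|^2$ is not differentiable at $0$.

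The paper's passage from the uniform limit to ``$C^1$ at $0$'' silently uses this linearity. The linearity does hold for semi-spray flows (where the limit is $\delta^i_j$), hence for the magnetic flows needed in Theorem~\ref{thm:magnetic-finiteness}. So proving the $C^1$ claim under the explicit hypothesis that $X_H(x,\cdot)$ is linear (e.g.\ $X_H(x,v)=v$), as you do, is the right fix. However, drop the remark that ``the general case needs only continuity of the derivative along the flow'': no extra regularity rescues the general case.

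Two minor points. For $k=2$ your function $a$ is only $C^1$, so the derivative of the remainder is $o(1)$ rather than $O(\|v\|)$. Likewise, the remainder in Step~3 is $o(t^2)$ rather than $O(t^3)$. Both weaker bounds still suffice.
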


\begin{proof}
 The $C^k$-regularity of $\exp^{\varphi}_x$ on $T_xM \smallsetminus \{0\}$ follows from \eqref{def-psi-exp}. In addition, all of its directional derivatives at $0$ exist as
 \begin{equation}\label{d0-psi-exp} {\rm d}_0\exp_x^{\varphi}(v) = \frac{\rm d}{{\rm d}t} \Big|_{t=0} \pi \circ \varphi^t(x,v) = X_H(x,v)\end{equation}for all $(x,v)\in UM$. Fixing coordinates $x^1, \ldots, x^n$ for $M$ centered at $x$, set
 \begin{equation}\label{gamma-i}
 \gamma^i(t,x,v) = \exp_x^{\varphi, i}(tv),
 \end{equation}where $\exp_x^{\varphi, i}(tv) = x^i(\exp_x^{\varphi}(tv))$; note that each $\gamma^i$ is of class $C^k$. Relative to linear coordinates $v^1,\ldots, v^n$ on $T_xM$, we may differentiate \eqref{gamma-i} with respect to $v^j$ to obtain
 \begin{equation}\label{d-gamma-i-d-vj}
 \frac{\partial \gamma^i}{\partial v^j}(t,x,v) =t \frac{\partial \exp_x^{\varphi,i}}{\partial v^j}(tv) \qquad\mbox{for all }t>0.
 \end{equation}Observe, however, that \eqref{d-gamma-i-d-vj} also holds at $t=0$, as $\gamma^i(0,x,v) = 0$, which is another consequence of \eqref{def-psi-exp} together with our choice of coordinates. With $(\partial \gamma^i/\partial t)(0,x,v) = X_H^i(x,v)$ for all $(x,v)\in UM$, where $X_H^i(x,v)$ denotes the $i$-th component of $X_H(x,v)$ in the coordinates $x^1,\ldots, x^n$ (and similarly for $X_V(x,v)$ below), this allows us to compute
 \begin{equation*}
\lim_{t\to 0^+} \frac{\partial \exp_x^{\varphi, i}}{\partial v^j}(tv) = \lim_{t\to 0^+} \frac{1}{t} \frac{\partial \gamma^i}{\partial v^j}(t,x,v) = \frac{\partial^2 \gamma^i}{\partial t \partial v^j}(0,x,v) = \frac{\partial^2 \gamma^i}{\partial v^j \partial t}(0,x,v) = \frac{\partial X_H^i}{\partial v^j}(x,v).
 \end{equation*}
The above limit is uniform in $v$ as $U_xM$ is compact and, together with \eqref{d0-psi-exp}, shows that $\exp_x^{\varphi}$ is of class $C^1$ at $0$.

Next, assume that $\varphi^t$ is an odd semi-spray flow, and that $\exp_x^{\varphi}$ is of class $C^2$ at $0$.
For any $v\in U_xM$, the curve $t\mapsto \exp_x^{\varphi}(tv)$ is a solution of \eqref{Dc'/dt=XV}, whose coordinate version reads
\begin{equation}\label{cov-coordinates}
 \frac{\partial^2\gamma^i}{\partial t^2}(t,x,v) + \Gamma_{jk}^i(\exp_x^{\varphi}(tv)) \frac{\partial\gamma^j}{\partial t}(t,x,v)\frac{\partial \gamma^k}{\partial t}(t,x,v) = X^i_V(\varphi^t(x,v)),
\end{equation}for all $i=1,\ldots, n$. Differentiating \eqref{gamma-i} with respect to $t$ twice, we obtain
\begin{equation}\label{d2gammai}
 \frac{\partial^2\gamma^i}{\partial t^2}(t,x,v) =
\frac{\partial^2 \exp_x^{\varphi,i}}{\partial v^j\partial v^k}(tv)v^jv^k.\end{equation}Using the $C^2$-regularity of $\exp_x^{\varphi}$ in order to take $t\to 0$ in \eqref{cov-coordinates}, together with \eqref{d2gammai} and the relations $(\partial \gamma^i/\partial t)(0,x,v) = v^i$ given by the semi-spray condition, it follows that
\begin{equation}\label{even-odd-again}
\left( \frac{\partial^2 \exp_x^{\varphi,i}}{\partial v^j\partial v^k}(0)+\Gamma^i_{jk}(x)\right)v^jv^k= X^i_V(x,v).
\end{equation}The left side of \eqref{even-odd-again} is manifestly even in $v$, while the right side is odd by assumption---this is only possible if both vanish individually. Therefore, $X_V(x,v)=0$ as claimed.
\end{proof}

\section{Rigidity results}\label{sec:rigidity}

The main goal of this section is to establish Theorems \ref{thm:vol-preserving} and \ref{thm:vol-preserving2}, working in the real-analytic category. We recall some of the notation from \cite{filip2024finiteness}. Let $(M,\g)$ be a closed, real-analytic Riemannian manifold of dimension $n\geq 3$, and assume that \begin{equation}\label{phi-assumptions}
 \text{$\varphi^t \colon \ UM \rightarrow UM$ is a real-analytic volume-preserving Anosov flow.}
\end{equation}
We also consider the Grassmannian bundle $HM$ of oriented hyperplanes tangent to $M$, and let \mbox{$\pi_H \colon FM \rightarrow HM$} be the unit tautological bundle over $HM$, with fibers explicitly given by $F_{(x, \Pi)}M = \{(x,\Pi,v) \mid (x,\Pi) \in HM \mbox{ and } v \in \Pi \cap U_xM\}$. Introducing a further projection $\pi_U \colon FM \rightarrow UM$, with $\pi_U(x,\Pi,v)=(x,v)$,
and denoting by $Z \subseteq HM$ the subset consisting of all hyperplanes which are tangent to a germ of a totally $\varphi$-invariant hypersurface, we set
\begin{equation}\label{I-and-A} I =\pi_H^{-1}(Z) \subseteq FM \qquad\mbox{and}\qquad A = \pi_U(I)\subseteq UM. \end{equation}
Our first claim is that the sets $I$ and $Z$ are real-analytic and $A$ is subanalytic. The method of proving this perfectly mirrors \cite[Corollary 3.2.3]{filip2024finiteness}; namely, we wish to take advantage of \cite[Lemma~3.2.2]{filip2024finiteness}, which establishes real-analyticity of the set consisting of all base points over whose fibers a real-analytic function (defined on a bundle) vanishes. Utilizing Section~\ref{sec:dynamical-geometry},
we now present the argument in detail.

\begin{Lemma}
 The set $Z$ is real-analytic. In particular, $I$ is also real-analytic, and $A$ is subanalytic.
\end{Lemma}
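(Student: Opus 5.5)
The plan is to follow \cite[Corollary 3.2.3]{filip2024finiteness}: express $Z$ as the set of base points $(x,\Pi)\in HM$ over whose fibers a real-analytic function vanishes identically, and then invoke \cite[Lemma 3.2.2]{filip2024finiteness}. The natural candidate function comes from the dynamical second fundamental form of Definition~\ref{dyn-2FF}, evaluated on the hypersurface obtained by ``flowing out'' $\Pi$ with the dynamical exponential map.

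\textbf{Construction of the candidate hypersurface.} Given $(x,\Pi)\in HM$, set
\[
N_{(x,\Pi)}=\exp^\varphi_x\bigl(\Pi\cap B_\varepsilon(0)\bigr)
\]
for a uniform small $\varepsilon>0$. By \eqref{d0-psi-exp}, $\mathrm{d}_0\exp^\varphi_x=X_H(x,\cdot)$, so under a no-fixed-point-type nondegeneracy this is an embedded hypersurface germ. If $\Pi$ is tangent to a germ of a totally $\varphi$-invariant hypersurface $N$, then $UN$ is $\varphi^t$-invariant. Hence the orbits starting at $(x,v)$ with $v\in\Pi$ stay in $UN$, so $N_{(x,\Pi)}\subseteq N$ and the two germs agree. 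Conversely, if $N_{(x,\Pi)}$ is totally $\varphi$-invariant, then $\Pi\in Z$. Thus $(x,\Pi)\in Z$ if and only if ${\rm I\!I}^\varphi$ of $N_{(x,\Pi)}$ vanishes identically near $x$, by Corollary~\ref{thm:characterization-dynamical-second-fund-form}. By real-analyticity, this is equivalent to the vanishing of all its derivatives at $x$, or alternatively to vanishing on a fixed real-analytic family of points of $U N_{(x,\Pi)}$.

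\textbf{Packaging as a fiberwise condition.} I would define a bundle $P\to HM$ whose fiber over $(x,\Pi)$ is the compact set $\{(w,u)\}$ with $w\in\Pi$, $\|w\|\le\varepsilon$, and $u$ a unit vector tangent to $N_{(x,\Pi)}$ at $\exp^\varphi_x(w)$. On $P$ I define $F$ to be ${\rm I\!I}^\varphi$ of $N_{(x,\Pi)}$ at $(\exp^\varphi_x(w),u)$. The map $(x,\Pi,w)\mapsto \exp^\varphi_x(w)$ is real-analytic away from $w=0$ by Lemma~\ref{lem:dynamical-exp}. Its tangent planes, second fundamental form, and $X_H,X_V$ are then real-analytic in all variables, so $F$ is real-analytic on the region $w\neq0$. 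By analyticity and connectedness of the punctured fiber (note $\dim\Pi\ge2$), vanishing on the region $0<\|w\|<\varepsilon$ already forces vanishing of the whole germ. Then
\[
Z=\{(x,\Pi)\mid F\equiv0 \text{ on the fiber}\},
\]
and \cite[Lemma 3.2.2]{filip2024finiteness} gives that $Z$ is real-analytic.

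\textbf{Consequences and main obstacle.} Once $Z$ is real-analytic, $I=\pi_H^{-1}(Z)$ is real-analytic, since $\pi_H$ is a real-analytic submersion. Then $A=\pi_U(I)$ is the image of a real-analytic set under the proper real-analytic map $\pi_U$, because $FM$ has compact fibers over $UM$; hence $A$ is subanalytic.

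The main obstacle is the regularity of $\exp^\varphi_x$ at $0$. It is only guaranteed to be $C^1$ there, and for odd semi-spray flows it is not even $C^2$ unless $X_V=0$. This is why I restrict $F$ to $w\neq0$ and rely on analytic continuation along the punctured disc. I also need $N_{(x,\Pi)}$ to be an immersed hypersurface away from $0$, which requires $\exp^\varphi_x|_\Pi$ to be an immersion. Here I would use the no-conjugate-points hypothesis, so that $\exp^\varphi_x$ is a local diffeomorphism, together with a uniform choice of $\varepsilon$ by compactness.
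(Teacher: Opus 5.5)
Your proposal is correct and follows essentially the same route as the paper. Both flow out $\Pi$ by $\exp^\varphi_x$, work only away from the origin (where $\exp^\varphi_x$ is merely $C^1$), use the no-conjugate-points assumption to obtain an immersed real-analytic hypersurface there, and apply Lemma 3.2.2 of Filip--Fisher--Lowe to the (squared norm of the) dynamical second fundamental form of $\exp^\varphi_x(\Pi)$, viewed as a real-analytic function on a bundle over $HM$ with connected fibers. The differences are cosmetic: the paper parametrizes by $(v,t)\in U_x\Pi\times(0,\infty)$ via the augmented exponential map and integrates $\|{\rm I\!I}^\varphi\|^2$ over unit tangent directions, whereas you keep $u$ as a fiber variable on a punctured $\varepsilon$-ball; your argument that $I=\pi_H^{-1}(Z)$ is analytic and $A=\pi_U(I)$ is subanalytic (by properness of $\pi_U$) fills in a step the paper leaves implicit.
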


\begin{proof}
 For any $(x, \Pi) \in HM$, we set $S_\Pi=\exp_x^{\varphi}(\Pi)$. Note that $S_\Pi$ is a $C^1$-immersed hypersurface of $M$ but, as $\exp_x^{\varphi}$ is real-analytic away from the origin, so is $S_\Pi \smallsetminus \{x\}$. In addition, $S_\Pi$~represents the germ of a totally \mbox{$\varphi$-in\-va\-ri\-ant} hypersurface if and only if for every $y \in S_\Pi$, unit vector $v \in T_y S_\Pi$, and $t \in \R$, we have that $\pi \circ \varphi^t(y,v) \in S_\Pi$.

 Whenever $(x, \Pi, v) \in FM$, the curve $t\mapsto \pi\circ \varphi^t(x,v)$ lies entirely in $S_\Pi$ by definition of $S_\Pi$. This allows us to define the augmented exponential map
 \begin{equation}\label{aug-exp} E \colon \ FM \times (0,\infty) \rightarrow HM \qquad \mbox{by } E(x, \Pi, v, t) = (\exp^\varphi_x(tv), {\rm d}_{tv}\exp_x^{\varphi}(\Pi)).\end{equation}
 Explicitly, $E$ sends $(x, \Pi, v)$ to the corresponding tangent hyperplane to $S_\Pi$ at $\exp_x^{\varphi}(tv)$. Furthermore, since $tv$ is never the zero vector, this is a real-analytic mapping. Define the mapping $\alpha\colon HM \to \mathbb{R}$ by
 \begin{equation}\label{defn-alpha}
 \alpha(x,\Pi) = \int_{U_xS_\Pi} \|{\rm I\!I}^\varphi_x(v)\|^2\,{\rm d}\mu(v),
 \end{equation}
 where ${\rm I\!I}_x^\varphi$ is the dynamical second fundamental form of $S_\Pi$ at $x$, the codomain $[TS_\Pi]^\perp \oplus [TS_\Pi]^\perp$ of ${\rm I\!I}^\varphi$ is equipped with the product fiber metric $\g\oplus \g$, and $\mu$ denotes the Lebesgue measure on $U_xS_\Pi$ induced by $\g$. With this in place, the pullback mapping $E^*\alpha \colon FM \times (0,\infty) \rightarrow \mathbb{R}$ is also real-analytic, and Corollary \ref{thm:characterization-dynamical-second-fund-form} yields that $(x,\Pi) \in Z$ if and only if we have that the function $E^*\alpha(x, \Pi, \cdot, \cdot) \colon U_xS_\Pi \times (0,\infty) \rightarrow \mathbb{R}$ vanishes identically. This allows us to obtain the conclusion from \cite[Lemma 3.2.2]{filip2024finiteness}, taking $\pi\colon X\to Y$ therein to be~$E$ in~\eqref{aug-exp} and $\rho = E^*\alpha$ for~$\alpha$ in~\eqref{defn-alpha}.
\end{proof}

Once $Z$ is real-analytic, it follows from nearly the same argument as in \cite[Proposition~3.2.5]{filip2024finiteness} that there is at least one totally $\varphi$-invariant hypersurface through every tangent vector. The only difference is that one has to take advantage of the fact that the flow is without conjugate points, along with~\eqref{eqn:condition} and the inverse function theorem.
We sketch the details for the reader's convenience.

\begin{proof}[Proof of Theorem \ref{thm:vol-preserving}]
 We must show that $A=UM$, cf.\ \eqref{I-and-A}. As $A$ is closed and \mbox{$\varphi^t$-invariant}, it suffices to show that there exists $(x,v) \in A$ whose forward-orbit or backward-orbit is dense. First, using the assumption that $M$ has infinitely many closed, immersed, distinct totally \mbox{$\varphi$-invariant} hypersurfaces, it follows that we have $\dim(Z) \geq n$. As a consequence, we obtain that \mbox{$\dim(I) = \dim(Z) + (n-2)$}, and hence $\dim(I) \geq 2n-2$.

 Let $A^{\mathrm{o}} \subseteq A$ be the collection of smooth points, and consider the restricted projection \mbox{$\pi_U \colon I \rightarrow A$}, cf.\ \eqref{I-and-A}. There are now two possibilities: either
 \begin{enumerate}[\normalfont(i)]\itemsep=0pt
\item $\dim(A) = \dim(I)$, or \item there is a further relatively open subset $A^{\mathrm{o} \mathrm{o}} \subseteq A$ such that $\pi_U|_I$ has fibers of positive dimension over $A^{\mathrm{o} \mathrm{o}}$.
\end{enumerate} We address each case separately.

 If possibility (i) holds, suppose that $\dim(A) = \dim(I) \geq 2n-2$. If equality holds, then $\dim(UM) = 2n-1$ yields $A = UM$. Otherwise, if $\dim(A) = 2n-2$, let $(x,v) \in A^{\mathrm{o}}$ be a smooth point, and consider the quotient bundle $NUM$ over $UM$, with fibers given by ${N_{(x,v)}UM = T_{(x,v)}UM/\mathbb{R}X(x,v)}$. The hyperplane $T_{(x,v)}A/\mathbb{R} X(x,v)$ must be transverse---with\-out loss of generality---to the stable bundle $\mathbb{E}^{s}(x,v)$ (as $\varphi^t$ is Anosov, cf.\ \eqref{phi-assumptions}), in which case we have that $\dim(\mathbb{E}^{cs}(x,v) \cap T_{(x,v)}A) = \dim(W^{cs}(x,v)) - 1 = n-1$; see \eqref{center-stable}. Let $Q$ be a real-analytic submanifold of $A^{\mathrm{o}}$ such that $T_{(x,v)}UM = T_{(x,v)}Q \oplus \mathbb{E}^{cs}(x,v)$. Such $Q$ remains transverse to $W^{cs}$ in a neighborhood $U$ of $(x,v)$, and so the projection $\pi_u^{cs}(Q \cap U) \subseteq W^u_{\rm loc}(x,v)$---cf.\ \eqref{local-s/u} and \eqref{pi-cs-u}---contains a non-empty open set. By Lemma \ref{lem:cor-2-1-6}, we obtain a forward-dense point in~$A$.

If (ii) holds instead, we may fix a point $(x,v) \in A^{\mathrm{o} \mathrm{o}}$ such that the fiber $I_{(x,v)} = \pi_H^{-1}(x,v) \subseteq I$
contains a real-analytic curve $\Gamma$. We proceed in a small neighborhood $U \subseteq M$ of $x$. For a point $i = (x, \Pi, v)$ along the curve $\Gamma$, let $H_i \subseteq U$ be a totally $\varphi$-invariant hypersurface with $T_xH_i = \Pi$. As $\exp_x^\varphi$ is a local diffeomorphism by assumption, \begin{equation}\label{U1-defn}\parbox{.82\textwidth}{the union of all such $H_i$ must contain an open subset $U_1$ of $U$.}\end{equation} By \eqref{eqn:condition}, reducing $W^{cs}_{\rm loc}(x,v)$ further if needed, the projection $UM\to M$ restricts to a diffeomorphism $\pi\colon W^{cs}_{\rm loc}(x,v)\to M$ onto its image; we denote its inverse by $\delta_{(x,v)}\colon U \to W^{cs}_{\rm loc}(x,v)$. Given $y\in U$, \begin{equation}\label{char-delta}\parbox{.65\textwidth}{the element $\delta_{(x,v)}(y)$ is characterized by the property that the distance $d(\varphi^t(\delta_{(x,v)}(y)),\varphi^t(x,v))$ remains bounded as $t\to +\infty$,}\end{equation} cf.\ \eqref{Ws-bounded-distance}. It remains to show that \begin{equation}\label{U1-image}
 \parbox{.63\textwidth}{$\delta_{(x,v)}(U_1) \subseteq A$, where $U_1$ is the open set in \eqref{U1-defn}.}
 \end{equation}
 This way, as $A \cap W^{cs}(x)$ contains an open set, we may use Lemma \ref{lem:cor-2-1-6} to deduce that $A = UM$, concluding the proof. We establish \eqref{U1-image} directly: if $p_1 \in U_1 \cap H_i$ for some $i\in \Gamma$, using that $H_i$ is a totally $\varphi$-invariant hypersurface containing both $p_1$ and $x$, we obtain a unique $v_1\in U_{p_1}H_i$ such that $d(\varphi^t(p_1,v_1), \varphi^t(x,v))$ remains bounded as $t\to +\infty$; it follows from \eqref{char-delta} that $(p_1,v_1) = \delta_{(x,v)}(p_1)$ is tangent to $H_i$, and hence in $A$ as required.
 \end{proof}

We now consider the \emph{frame bundle} $PM\to UM$, whose fiber over any element $(x,v) \in UM$ consists of all ordered orthonormal bases of $T_xM$ starting with $v$; it is easy to see that it is a~principal $\SO(n-1)$-bundle over $UM$. The construction of the frame extension of $\varphi^t$ used in~\cite{filip2024finiteness} must be modified in order for us to proceed and establish Theorem \ref{thm:vol-preserving2}. It is from this point onwards that the discussion is specialized to $s$-magnetic flows; Lemma \ref{lem:reduction} allows us to assume without loss of generality that $s=1$ in Theorems \ref{thm:magnetic-finiteness} and \ref{thm:vol-preserving2}. Along each unit-speed magnetic geodesic $\gamma\colon \R\to M$, we may consider the \emph{magnetic covariant derivative operator} $\mathcal{D}$ given by
\begin{equation}\label{mag-covariant}
 (\mathcal{D}W)(t) = \frac{{\rm D}W}{{\rm d}t}(t) - \Y_{\gamma(t)}(W(t)),
\end{equation}for any vector field $W$ along $\gamma$ (see \cite{assenza-imrn,grognet-etds} for implicit uses of $\mathcal{D}$). In fact, $\mathcal{D}$ defines a metric-compatible connection in the pullback bundle $\gamma^*(TM)$ over $\R$ (as $\Y$ in \eqref{Lorentz-force} is skew-adjoint), while \eqref{eqn:magnetic-geodesic} reduces to $\mathcal{D}\dot{\gamma}=0$.

We define the \emph{magnetic frame-extension flow} $\Phi^t\colon PM\to PM$ of $\varphi^t$ as
 \begin{equation}\label{frame-extension}\Phi^t(x, v, v_2, \ldots, v_n) = (\gamma(t), v(t), v_2(t), \ldots, v_n(t)), \end{equation}
 where we set $\varphi^t(x,v) = (\gamma(t), v(t))$ and each $v_j(t)$ is the $\mathcal{D}$-parallel transport of the corresponding vector $v_j$ along the curve $x(t)$. By metric-compatibility of $\mathcal{D}$, \eqref{frame-extension} defines a principal isometric extension of $\varphi^t$, and so we may consider its Brin group $\mathcal{B} \leq \SO(n-1)$.

 In \cite[Remark 3.3.3]{filip2024finiteness}, the authors observe that there are topological obstructions to the principal bundle having a continuous reduction of the structure group to $H \subseteq\SO(n-1)$ with~$H$ connected. Provided $n$ is odd and $n \neq 7$, there does not exist a non-trivial reduction. If $n$~is even and is not either $8$ or $134$, then there are two cases: either $n \equiv 2 \pmod{4}$ or $n=4$, in which case $H$ must fix a vector in $\mathbb{R}^{n-1}$, or $n \equiv 0 \pmod{4}$, in which case $H$ must act reducibly on $\mathbb{R}^{n-1}$. In the exceptional cases, if $n = 7$ then we must have that $H$ contains $\text{SU}(3)$ and fixes a complex structure on $\mathbb{R}^6$, if $n=8$ then either $H$ acts reducibly on $\mathbb{R}^7$ or $H$ fixes a nonzero $3$-form on $\mathbb{R}^7$ and is either $\SO(3)$ or ${\rm G}_2$, and finally if $n =134$ then $H$ either fixes a vector in $\mathbb{R}^{133}$ or it fixes a nonzero $3$-form and equals $E_7/(\mathbb{Z}/2 \mathbb{Z})$.

 With the above discussion and \cite[Step 1 of proof of Theorem 3.1]{cekic2024ergodicity}, the authors were able to leverage these obstructions in \cite[Proposition 3.3.5]{filip2024finiteness} along with the fact that there are infinitely many closed totally geodesic hypersurfaces in order to enumerate all possibilities for what \begin{equation}\label{coset-space}\mbox{the coset space }\mathcal{C}= \mathcal{B} \backslash (\SO(n-1)/\SO(n-2))\end{equation} could be, provided the Brin group $\mathcal{B}$ is connected. In light of Section~\ref{sec:dynamical-geometry}, we observe that these arguments generalize readily without any modification, provided there is a notion of a frame flow. In our setting, we have \begin{equation}\label{Brin-dichotomy}
\parbox{.75\textwidth}{if $\mathcal{B}$ is connected and $M$ admits infinitely many closed totally $1$-magnetic hypersurfaces, then either $\mathcal{B} = \SO(n-1)$ or $\mathcal{C}$ equals $[-1,1]$ or a point.}
 \end{equation}
We now explain why we may assume the Brin group is connected. Below, by a \emph{magnetomorphism} between manifolds equipped with magnetic systems, we mean a diffeomorphism which preserves both the metric and magnetic $2$-form, cf.\ \cite[Section 1.1]{ABM_2025}. Similarly, a \emph{magnetomorphic covering} between manifolds equipped with magnetic systems is a covering map which is a local magnetomorphism. The next lemma is a direct magnetic adaptation of \cite[Lemma 3.3]{cekic2024ergodicity}, whose details
we again provide.

\begin{Lemma}\label{Brin-connected}
 With the above setup, and with $\mathcal{B}^0$ denoting the identity component of $\mathcal{B}$, there is a finite magnetomorphic covering of $M$ whose magnetic frame-extension flow has the Brin group $\mathcal{B}^0$.
\end{Lemma}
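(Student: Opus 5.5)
Following \cite[Lemma 3.3]{cekic2024ergodicity}, I would use the reduction of structure group coming from a relatively minimal set of the magnetic frame-extension flow, and pass to the cover of $M$ (not of $UM$) on which this reduction becomes connected.

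\textbf{Step 1: a reduced subbundle and the finite cover of $UM$.} By \eqref{ergodic-component} (or \eqref{HX} with the facts (i)--(iii) after the definition of the Brin group), there is a $\Phi^t$-invariant closed subset $Q\subseteq PM$ that is a continuous principal $\mathcal{B}$-subbundle over $UM$. Since $\mathcal{B}$ is a compact Lie group, $\mathcal{B}/\mathcal{B}^0$ is finite. The quotient $Q/\mathcal{B}^0\to UM$ is therefore a finite covering, with fibers $\mathcal{B}/\mathcal{B}^0$. Moreover, $\Phi^t$ descends to $Q/\mathcal{B}^0$, because the flow commutes with the right $\mathcal{B}$-action.

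\textbf{Step 2: pushing the cover down to $M$.} The cover from Step 1 lives over $UM$, not over $M$. For $n\geq 3$ the fibers $S^{n-1}$ of $UM\to M$ are simply connected, so the homotopy long exact sequence gives an isomorphism $\pi_1(UM)\cong\pi_1(M)$. Hence every finite cover of $UM$ is the pullback $U\widehat M\to UM$ of a finite cover $\widehat M\to M$; here I would use a connected component of $Q/\mathcal{B}^0$.

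Lift $(\g,\sigma)$ to $\widehat M$. The covering map is then a local magnetomorphism, i.e.\ a magnetomorphic covering, and it intertwines the unit-speed magnetic flows. Since $\mathcal{D}$ in \eqref{mag-covariant} is defined locally, it also intertwines the magnetic frame-extension flows, identifying $P\widehat M$ with the pullback of $PM$.

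\textbf{Step 3: computing the new Brin group.} On $\widehat M$, the tautological lift of $Q$ (points of $Q$ mapping to the chosen sheet of $Q/\mathcal{B}^0$) is a closed $\widehat\Phi^t$-invariant principal $\mathcal{B}^0$-subbundle of $P\widehat M$. The volume-preserving Anosov property lifts to finite covers, so the Brin group $\widehat{\mathcal{B}}$ of the lifted flow is defined.

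To compare $\widehat{\mathcal{B}}$ with $\mathcal{B}$, I would use that the stable and unstable holonomies in \eqref{holonomy-map} are characterized by asymptotic distances, which are local. Hence the holonomies on the cover are lifts of those on $UM$, and $su$-loops on $U\widehat M$ project to $su$-loops on $UM$. This gives $\widehat{\mathcal{B}}\leq\mathcal{B}$. Since holonomies preserve the invariant subbundle, they preserve its lift, which gives $\widehat{\mathcal{B}}\leq\mathcal{B}^0$.

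For the reverse inclusion, the $su$-loops on $UM$ whose lifts close up have holonomies lying in $\mathcal{B}^0$ (they fix the sheet). Among these, the loops contractible in $UM$ or accessible within a sheet should generate a dense subgroup of $\mathcal{B}^0$; equivalently, $\widehat{\mathcal{B}}$ has finite index in $\mathcal{B}$, so it contains $\mathcal{B}^0$. Concretely, I would note that the transitivity group of a relatively minimal set on the cover is open in $\mathcal{B}$, because relatively minimal sets upstairs project onto relatively minimal sets downstairs of the same dimension.

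\textbf{Main obstacle.} I expect the hardest step to be this reverse inclusion $\widehat{\mathcal{B}}\supseteq\mathcal{B}^0$. I would first try the dimension-plus-openness argument just sketched, as in \cite{cekic2024ergodicity}, which is reused verbatim since only the frame flow changes.
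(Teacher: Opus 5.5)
Your proposal is correct and follows the paper's construction: you quotient the invariant principal $\mathcal{B}$-subbundle by $\mathcal{B}^0$, descend the resulting finite cover of $UM$ to $M$ via $\pi_1(UM)\cong\pi_1(M)$, and pull back the magnetic system. The paper avoids choosing a component by noting that the ergodic subbundle $P_{\mathcal{B}}$ is connected (the flow is transitive on it), and it asserts the new Brin group is $\mathcal{B}^0$ only ``by construction''. Your reverse inclusion is cleaner if you observe that an $su$-loop at $x$ lifts to a closed loop in $Q/\mathcal{B}^0$ exactly when its holonomy lies in $\mathcal{B}^0$. The new holonomy group is then $\theta_x(\mathcal{H}_x)\cap\mathcal{B}^0$, which is dense in the open subgroup $\mathcal{B}^0$. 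Your dimension argument also works, provided you only claim that the projection of an upstairs relatively minimal set \emph{contains} a downstairs one, not that it is one.
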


\begin{proof}
 By \eqref{ergodic-component}, we may fix a $\Phi^t$-invariant principal $\mathcal{B}$-subbundle $P_{\mathcal{B}}$ of $PM$ on which $\Phi^t$ is ergodic. Since $\Phi^t$ is transitive on $P_{\mathcal{B}}$, we have that $P_{\mathcal{B}}$ is connected. As the quotient $\mathcal{B}/\mathcal{B}^0$ is discrete \cite[Proposition 6.15]{MR4242835}, the quotient projection $P_{\mathcal{B}} / \mathcal{B}^0 \to UM$ is a finite covering map, with deck transformation group $\mathcal{B}/\mathcal{B}^0$. At the same time, as $n\geq 3$, it follows from the long exact sequence in homotopy that $\pi_1(UM)\cong \pi_1(M)$. Thus, we may fix a subgroup $\Gamma \leq \pi_1(M)$ acting freely and properly discontinuously on the universal covering $\widetilde{M}$ of $M$ for which the quotient $\hat{M} = \widetilde{M}/\Gamma$, equipped with the pullback magnetic system, has its own unit-tangent bundle $U\hat{M}$ equivariantly diffeomorphic to $P_{\mathcal{B}}/\mathcal{B}^0$. Here, $U\hat{M}$ is equipped with its own $1$-magnetic flow, while $P_{\mathcal{B}}/\mathcal{B}^0$ has the quotient flow induced by $\Phi^t$. The conclusion now follows as the Brin group of the magnetic frame-extension flow on $P\hat{M}\to U\hat{M}$ is $\mathcal{B}^0$, by construction.
 \end{proof}

 Utilizing Lemma \ref{Brin-connected} and passing to a finite magnetomorphic covering of $M$ if needed, we may assume without loss of generality that $\mathcal{B}$ is connected without affecting the conclusion of Theorem~\ref{thm:vol-preserving}. In \cite[Proposition 3.3.6]{filip2024finiteness}, the authors utilize the equivalent of \eqref{Brin-dichotomy} in order to argue why the dimension of the fibers of $HM \rightarrow M$ have full dimension. If the coset space~$\mathcal{C}$ in \eqref{coset-space} is a point, the conclusion follows immediately, as the generic part of the set ${I = \pi_H^{-1}(Z)}$ corresponds to a nonempty closed subset of $\mathcal{C}$, and the set $I$ is saturated by the fibers of the projection to $HM$. Thus, in order to show that the fibers have full dimension, one only has to argue when $\mathcal{C} = [-1,1]$. The argument for this case is purely topological in \cite[Proposition~3.3.6]{filip2024finiteness}, and hence it carries over immediately to our setting:
\begin{equation}\label{full-dim}
 \parbox{.83\textwidth}{$\dim(Z(x)) = \dim (H_xM) = n-1$ for all $x\in M$, where we set $Z(x) = H_xM\cap Z$.}\end{equation}
We now conclude the proof of Theorem \ref{thm:vol-preserving2}, following the argument in \cite[Section 3.3.7]{filip2024finiteness}.

\begin{proof}[Proof of Theorem \ref{thm:vol-preserving2}]
Since $Z \subseteq HM$ is a closed real-analytic set of full dimension, it suffices to show that $Z$ is open. For every $z \in Z$, let $Z_z$ denote the germ of $Z$ at $z$. This has full dimension by \eqref{full-dim}, and therefore the complexified germ is an entire complex neighborhood of~$z$. By \cite[Chapter V, Proposition 1]{narasimhan2006introduction}, the germ $Z_z$ is an entire real neighborhood of $z$, and hence $Z$ is open.
\end{proof}

In the same way that Corollary \ref{cor:mag-cartan} followed from Theorem \ref{dyn-Cartan}, we have that Theorem \ref{thm:magnetic-finiteness} follows from Theorems \ref{thm:vol-preserving2}, and \ref{dyn-Cartan} applied to the rescaled magnetic system $\bigl(s^{-2}\g,s^{-2}\sigma\bigr)$ instead of the original system $(\g,\sigma)$, cf.\ Lemmas \ref{lem:reduction} and \cite{Paternains-94-convex}. In fact, as this argument shows, it suffices to assume that the \mbox{$s$-mag\-ne\-tic} flow is Anosov in Theorem \ref{thm:magnetic-finiteness} rather than the stronger assumption that the $s$-magnetic curvature is negative.

\begin{rem}\label{bugs}
 It is not immediately clear how to generalize \eqref{mag-covariant} to any semi-spray flow $\varphi^t$ on $UM$. Namely, the expression ${\rm D}W/{\rm d}t - X_V(\gamma, W)$ (cf.\ \eqref{hor-ver-comps}) does not define a linear connection on $\gamma^*(TM)$ because (i) $X_V(x,v)$ does not depend linearly on $v$, and (ii) the vector field $W$ is not restricted to have unit length. One can work around (i) by defining $\mathcal{D}^\varphi$ via the above expression, with $X_V(\gamma,W)$ replaced by the fiber-derivative of $X_V$ at $(\gamma,\dot{\gamma})$, evaluated at $W$, but a different issue arises: $\mathcal{D}^\varphi$ is not guaranteed to be a metric-compatible linear connection on $\gamma^*(TM)$, as the fiber-derivative of $X_V$ need not be skew-adjoint. This would be necessary to ensure that the $\mathcal{D}^\varphi$-parallel transport preserves the tangent spaces to totally $\varphi$-invariant submanifolds. For magnetic flows, however, $\mathcal{D}^\varphi$ agrees with $\mathcal{D}$ in \eqref{mag-covariant}.
\end{rem}

\subsection*{Acknowledgements}

The authors would like to thank David Fisher for suggesting that we explore the magnetic version of the result in \cite{filip2024finiteness} as well as for useful discussions. The authors would also like to thank Simion Filip and Ben Lowe for explaining various parts of their proof as well as many helpful discussions, Jeffrey Meyer for providing some very helpful intuition for arithmetic manifolds, and the referees for the helpful comments on an earlier version of this text. The first author was supported by the National Science Foundation under award No.\ DMS-2503020.

\pdfbookmark[1]{References}{ref}
\LastPageEnding

\end{document}